\newtheorem{thm}{Theorem}
\newtheorem{lem}[thm]{Lemma}
\newtheorem{prop}[thm]{Proposition}
\newtheorem{ques}{Question}
\theoremstyle{definition}
\newtheorem{defn}{Definition}
\theoremstyle{remark}
\newtheorem{rem}{Remark}[section]
\numberwithin{equation}{section}
\def\R{\mathbb R}
\def\f{\frac}
\def\la{\langle}
\def\ra{\rangle}
\def\pt{\partial}
\begin{document}
\title[Gauss-Bonnet-Chern mass for  graphic manifolds]{The Gauss-Bonnet-Chern mass for  graphic manifolds}

\author{Haizhong Li}
\address{Department of mathematical sciences, and Mathematical Sciences
Center, Tsinghua University, 100084, Beijing, P. R. China}
\email{hli@math.tsinghua.edu.cn}
\author{Yong Wei}
\address{Department of mathematical sciences, Tsinghua University, 100084, Beijing, P. R. China}
\email{wei-y09@mails.tsinghua.edu.cn}
\author{Changwei Xiong}
\address{Department of mathematical sciences, Tsinghua University, 100084, Beijing, P. R. China}
\email{xiongcw10@mails.tsinghua.edu.cn}
\thanks{The research of the authors was supported by NSFC No. 11271214.}
\keywords{Gauss-Bonnet-Chern mass, asymptotically flat graphic manifold, flat normal bundle}

\begin{abstract}
In this paper, we prove a positive mass theorem and Penrose-type inequality of the Gauss-Bonnet-Chern mass $m_2$ for the graphic manifold with flat normal bundle.
\end{abstract}
\maketitle

\section{Introduction}

A complete manifold $(M^n,g)$ is said to be asymptotically flat of order $\tau$ if there is a compact set $K$ such that $M\setminus K$ is diffeomorphic to $\R^n\setminus B_R(0)$ for some $R>0$ such that in this coordinate chart, the metric $g_{ij}(x)$ satisifes
\begin{align*}
  &g_{ij}(x) = \delta_{ij}+\sigma_{ij}, \\
   & |\sigma_{ij}|+|x||\pt_k\sigma_{ij}|+|x|^2|\pt_k\pt_l\sigma_{ij}| =O(|x|^{-\tau}).
\end{align*}
The ADM mass was introduced by Arnowitt, Deser and Misner \cite{ADM} for asymptotically flat manifold.
 \begin{align}\label{def-adm}
 m_1=m_{ADM}:=\frac 1{2(n-1)\omega_{n-1}}\lim_{r\rightarrow\infty}\int_{S_r}(g_{ij,i}-g_{ii,j})\nu_jdS,
 \end{align}
where $\omega_{n-1}$ is the area of the standard $(n-1)$ dimensional unit sphere, $S_r$ is the Euclidean coordinate sphere, $dS$ is the volume element on $S_r$ induced by the Euclidean metric and $\nu=r^{-1}x$ is the outward normal of $S_r$ in $\R^n$.  Bartnik \cite{Ba} proved that the ADM mass is well-defined and is a geometric invariant for asymptotically flat manifold provided the order $\tau>\frac {n-2}2$.

The positive mass theorem and Penrose inequality for ADM mass are two important results in differential geometry and general relativity. The positive mass theorem states that the ADM mass is nonnegative on any asymptotically flat manifold with order $\tau>\frac{n-2}2$ and with nonnegative scalar curvature, which was proved by Schoen-Yau \cite{SY79,SY80} for $3\leq n\leq 7$, Schoen-Yau \cite{SY88} for conformally flat manifold, Witten \cite{Wit} for spin manifold, Lam \cite{Lam} for codimension one graphic manifold and Mirandola-Vit\'{o}rio \cite{MV} for arbitrary codimension graphic manifold with flat normal bundle.  The Penrose inequality can be viewed as a generalization of the positive mass theorem in the presence of an area minimizing horizon. When the asymptotically flat manifold has nonnegative scalar curvature and has order $\tau>\frac {n-2}2$, the Penrose inequality gives the lower bound for the ADM mass of $M$ in terms of the area of the horizon $\Sigma$, precisely
\begin{align*}
  m_{ADM} \geq& \frac 12(\frac{|\Sigma|}{\omega_{n-1}})^{\frac{n-2}{n-1}}.
\end{align*}
The equality holds if and only if $M$ is isometric to the Schwarzschild metric. The Penrose inequality was proved by Huisken-Ilmanen \cite{HI} for $n=3$ and $\Sigma$ is connected by using inverse mean curvature flow, Bray \cite{Br} for $n=3$ and $\Sigma$ has multiple components by using conformal flow, Bray-Lee \cite{BL} for $3\leq n\leq 7$, with the extra requirement that $M$ is spin for the rigidity statement. Lam \cite{Lam} also proved the Penrose inequality for codimension one graphic manifold. See also \cite{HW1,HW2,deLG1,deLG2} for related work. Recently, Mirandola-Vit\'{o}rio generalized Lam's result to arbitrary codimension graph with flat normal bundle. The readers can refer to \cite{Br1,Mars} for surveys on the the positive mass theorem and Penrose inequality.

Recall that the tensor $P_1$ given by
\begin{align*}
P_1^{ijkl}=\frac 12(g^{ik}g^{jl}-g^{il}g^{jk})
\end{align*}
is closely related to the scalar curvature $R$. In fact, we have $R=P_1^{ijkl}R_{ijkl}$. By using the tensor $P_1$, one can also define a mass (see \cite{GWW}), which is just the ADM mass with a slightly different but equivalent form
\begin{align}
m_1=&\frac 1{(n-1)\omega_{n-1}}\lim_{r\rightarrow \infty}\int_{S_r}P_1^{ijkl}\pt_lg_{jk}\nu_idS\nonumber\\
=&\frac 1{2(n-1)\omega_{n-1}}\lim_{r\rightarrow \infty}\int_{S_r}(g^{ik}\pt^jg_{jk}-g^{jk}\pt^ig_{jk})\nu_idS.\label{def-adm1}
\end{align}

Recently, Ge-Wang-Wu \cite{GWW} introduced a new mass, which they called Gauss-Bonnect-Chern mass $m_2$, by using the tensor $P_2$ (in the following, we will write $P_2$ as $P$ for simplicity).
\begin{equation}\label{def-P}
 P^{ijkl}=R^{ijkl}+R^{jk}g^{il}-R^{jl}g^{ik}-R^{ik}g^{jl}+R^{il}g^{jk}+\frac 12R(g^{ik}g^{jl}-g^{il}g^{jk}).
\end{equation}
Recall that the second Gauss-Bonnet curvature $L_2$ satisfies $L_2=P_2^{ijkl}R_{ijkl}$.

\begin{defn}[\cite{GWW}] Let $n\geq 5$. Suppose $(M^n,g)$ is an asymptotically flat manifold of decay order $\tau>\f{n-4}{3}$, and the second Gauss-Bonnet curvature given by $L_2=P^{ijkl}R_{ijkl}$ is integrable. Then the second Gauss-Bonnet-Chern mass $m_2$ is defined as
\begin{equation}
m_2(g)=c_2(n)\lim_{r\rightarrow \infty} \int_{S_r}P^{ijkl}\pt_l g_{jk}\nu_i dS,
\end{equation}
where $c_2(n)=\f{1}{2(n-1)(n-2)(n-3)\omega_{n-1}}$.
\end{defn}

In the same paper \cite{GWW}, Ge-Wang-Wu proved that $m_2$ is well-defined and is a geometric invariant provided $(M,g)$ is asymptotically flat of order $\tau>\frac {n-4}3$. Then they asked the following natural question:
 \begin{ques}[\cite{GWW}]Is the mass $m_2$ nonnegative when the Gauss-Bonnet curvature $L_2$ is nonnegative? Does the Penrose inequality for $m_2$ also hold?
 \end{ques}
 In \cite{GWW}, Ge-Wang-Wu give an affirmative answer to these questions for codimension one graphic manifolds. In the second paper, Ge-Wang-Wu \cite{GWW2} answer the questions for conformally flat manifolds.

Inspired by Mirandola-Vit\'{o}rio's new paper \cite{MV}, we now consider the positive mass theorem and Penrose inequality of $m_2$ for arbitrary codimension graphic manifold. To state our theorems, we first give the following definition.

\begin{defn}
Let $f:\R^n\rightarrow \R^m$ be a smooth map and let $f_i^{\alpha},f_{ij}^{\alpha},f_{ijk}^{\alpha}$ denote the first,second and third derivatives of $f$, where $1\leq i,j,k\leq n$ and $1\leq \alpha\leq m$. $f$ is called asymptotically flat of order $\tau$ if
\begin{align*}
  |f_i^{\alpha}(x)|+|f_{ij}^{\alpha}(x)||x|+|f_{ijk}^{\alpha}(x)||x|^2=O(|x|^{-\tau/2})
\end{align*}
at infinity for some $\tau> (n-4)/3$.
\end{defn}
Since the induced metric on the graph $M=\{(x,f(x))|x\in\R^n\}$ (see section 2 for detail) is $g_{ij}=\delta_{ij}+f^{\alpha}_if^{\alpha}_j$. We can easily check that $M$ is asymptotically flat of order $\tau$.

\begin{thm}[Positive mass theorem]\label{thm2}
Let $M^n\subset \R^{n+m}$ be the graph of an asymptotically flat map $f:\R^n\rightarrow \R^m$ endowed with the natural metric. Then the second Gauss-Bonnet-Chern mass $m_2$ of $M$ is equal to
\begin{equation}
m_2(g)=\f{1}{2}c_2(n)\int_M(L_2+L_2^{\perp})\f{1}{\sqrt{G}}d\mu_M,
\end{equation}
where $L_2^{\perp}$ is defined  by \eqref{eq-L2} and it vanishes provided that $M$ has flat normal bundle. In particular, if $M$ has flat normal bundle and nonnegative $L_2$ curvature, then $m_2$ is nonnegative.
\end{thm}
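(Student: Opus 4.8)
The plan is to start from the flux definition of $m_2$ and turn it into a bulk integral over $\R^n$. For the graph $M=\{(x,f(x))\}$ the induced metric is $g_{ij}=\delta_{ij}+f_i^\alpha f_j^\alpha$, so $\partial_l g_{jk}=f^\alpha_{jl}f^\alpha_k+f^\alpha_j f^\alpha_{kl}$; moreover the inverse metric $g^{ij}$, the unit normal vectors, the second fundamental forms $h^\alpha_{ij}$, and --- through the Gauss equation $R_{ijkl}=\sum_\alpha(h^\alpha_{ik}h^\alpha_{jl}-h^\alpha_{il}h^\alpha_{jk})$ and its traces for $R_{jk}$ and $R$ --- the whole tensor $P^{ijkl}$ can be written explicitly in terms of $f$ and its first and second derivatives. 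Since in the positive-mass situation $f$ is globally smooth on $\R^n$, the divergence theorem on the Euclidean ball $B_r$ gives
\begin{equation*}
\int_{S_r}P^{ijkl}\partial_l g_{jk}\,\nu_i\,dS=\int_{B_r}\partial_i\bigl(P^{ijkl}\partial_l g_{jk}\bigr)\,dx ,
\end{equation*}
so the task reduces to identifying the bulk integrand $\partial_i(P^{ijkl}\partial_l g_{jk})$.

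The heart of the argument is the pointwise identity
\begin{equation*}
\partial_i\bigl(P^{ijkl}\partial_l g_{jk}\bigr)=\tfrac12\bigl(L_2+L_2^{\perp}\bigr)+\partial_i Y^i ,
\end{equation*}
where $Y^i$ is built from the $h^\alpha$ and $df$ in such a way that it is (a multiple of) the divergence of a tensor antisymmetric in two indices, so that $\int_{B_r}\partial_iY^i=0$ for every $r$. To prove it I would expand the left side with the formulas above: $\partial_l g_{jk}$ supplies a factor $df\cdot D^2f$, while $P^{ijkl}$ supplies a quadratic expression in $h^\alpha$ together with explicit $g$'s, so the left side is a quartic in the second fundamental form plus terms carrying one third derivative $f^\alpha_{ijk}$. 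The third-derivative terms should cancel by a Reilly / Newton-tensor mechanism --- the Newton-type tensor formed from the $h^\alpha$ and raised with $g^{-1}$ being divergence free up to a controllable error --- and this is exactly what produces $\partial_iY^i$. The remaining quartic must then be matched with $L_2$: by the Gauss equation $P^{ijkl}R_{ijkl}$ is itself a definite quartic in $h$, and the discrepancy between it and the quartic coming from $\partial_i(P^{ijkl}\partial_l g_{jk})$ is re-expressed, via the Ricci equation $R^{\perp}_{\alpha\beta ij}=\sum_k(h^\alpha_{ik}h^\beta_{jk}-h^\beta_{ik}h^\alpha_{jk})$, as a quadratic in the normal curvature $R^{\perp}$; that quantity is $L_2^{\perp}$ of \eqref{eq-L2}, which vanishes identically when the normal bundle is flat --- in particular whenever $m=1$.

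Granting the identity, the theorem follows at once. Since $d\mu_M=\sqrt{G}\,dx$ with $G=\det(g_{ij})$ and $\int_{B_r}\partial_iY^i=0$, letting $r\to\infty$ and using that $L_2$ (equivalently $L_2+L_2^{\perp}$, by the $R^\perp$ identity) is integrable gives
\begin{equation*}
m_2(g)=c_2(n)\lim_{r\to\infty}\int_{B_r}\Bigl(\tfrac12(L_2+L_2^{\perp})+\partial_iY^i\Bigr)dx=\tfrac12 c_2(n)\int_M(L_2+L_2^{\perp})\,\tfrac1{\sqrt{G}}\,d\mu_M .
\end{equation*}
If $M$ has flat normal bundle then $L_2^{\perp}\equiv 0$, so $m_2(g)=\tfrac12 c_2(n)\int_M L_2\,(\det g)^{-1/2}\,d\mu_M$, which is nonnegative as soon as $L_2\ge 0$.

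The step I expect to be the main obstacle is the pointwise identity. First, all index raising inside $P^{ijkl}$ is by $g^{-1}$, not by $\delta$, and these corrections do not drop out: tracking them is precisely what converts the naive $\det g$-weighted density into the $(\det g)^{-1/2}$-weighted one in the final formula, so the codimension-one computation of Ge-Wang-Wu~\cite{GWW} must genuinely be redone with several normal directions present. Second, and related, is the decay accounting: under the weak order $\tau>(n-4)/3$ neither $P^{ijkl}\partial_l g_{jk}$ nor $Y^i$ decays fast enough for naive integrability over the spheres $S_r$, so one must check that their slowly-decaying parts are exact divergences of antisymmetric tensors --- contributing nothing to integrals over the closed surfaces $S_r$ --- which is simultaneously what makes $m_2$ finite and what licenses discarding $\partial_iY^i$. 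Isolating $L_2^{\perp}$ and recognizing it via the Ricci equation as a pure normal-curvature term is the remaining piece.
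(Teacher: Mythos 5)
Your overall strategy --- push the flux integral into the bulk with the divergence theorem and identify $\partial_i(P^{ijkl}\partial_l g_{jk})$ with $\tfrac12(L_2+L_2^{\perp})$ --- is the same as the paper's, but the step you defer as ``the main obstacle'' is the entire content of the paper's proof, and your sketch of it is both incomplete and off-target in its mechanism. Proposition \ref{prop-mass} establishes the \emph{exact} pointwise identity \eqref{key-equ1}: there is no residual $\partial_i Y^i$ to discard, and no surviving third-derivative terms needing a Reilly/Newton-tensor cancellation. The third derivatives $f^{\alpha}_{ijl}$ vanish immediately because they are symmetric in index pairs in which $P^{ijkl}$ is antisymmetric (see \eqref{P-symmetri}); the term $\partial_iP^{ijkl}\,\partial_lg_{jk}$ is handled not by a Newton-tensor argument but by the Lovelock-type divergence-free property $\nabla_iP^{ijkl}=0$ of \eqref{P-divfree}, which converts the ordinary divergence of $P$ into explicit Christoffel terms $\Gamma^k_{ij}=U^{\alpha\gamma}f^{\alpha}_kf^{\gamma}_{ij}$ of the graph metric. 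One resulting piece recombines with $P^{ijkl}f^{\alpha}_{ik}f^{\alpha}_{jl}$ to give exactly $\tfrac12L_2$, and the leftover $P^{ijkl}T_{ijkl}$ must then be shown --- this is the delicate part, occupying Lemmas \ref{lem-Riem} through \ref{lem-R} and using the commutation formula \eqref{eq-commute} repeatedly --- to be expressible purely through the normal curvature operators $R^{\bot}_{\alpha\beta}=A^{\beta}A^{\alpha}-A^{\alpha}A^{\beta}$ and the operators $T^{\bot}_{\alpha}$; that explicit expression is \eqref{eq-L2}, and only it justifies the assertion that $L_2^{\perp}$ vanishes when the normal bundle is flat. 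Your proposal asserts this identification (``the discrepancy \dots is re-expressed via the Ricci equation as a quadratic in the normal curvature; that quantity is $L_2^{\perp}$'') without any computation, so the key claim of the theorem is assumed rather than proved; moreover the hedge that an extra antisymmetric-divergence $Y^i$ may appear suggests the computation has not actually been carried out.

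Two smaller points. First, the decay worry is misplaced: once \eqref{key-equ1} is in hand, $\int_{S_r}P^{ijkl}\partial_lg_{jk}\,\nu_i\,dS=\tfrac12\int_{B_r}(L_2+L_2^{\bot})\,dx$ for every $r$, and the limit exists by the integrability of $L_2$ built into the definition of $m_2$; the well-definedness of $m_2$ for $\tau>(n-4)/3$ is Ge-Wang-Wu's result and is not re-proved here. Second, the factor $\tfrac{1}{\sqrt{G}}$ in the final formula does not come from ``tracking $g^{-1}$ corrections inside $P$''; it is simply the change of volume element $d\mu_{\R^n}=\tfrac{1}{\sqrt{G}}\,d\mu_M$ between the Euclidean base and the graph. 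Your concluding step from flat normal bundle and $L_2\ge 0$ to $m_2\ge 0$ is fine, but it rests entirely on the unproved identity and on the unverified explicit form \eqref{eq-L2} of $L_2^{\perp}$.
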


The second part of this paper considers the Penrose inequality for $m_2$ in the graphic case. We consider asymptotically flat manifold containing an area outer minimizing horizon. A horizon is simply a minimal surface, which is area outer minimizing if every other surface enclosing it has greater area. We have
\begin{thm}\label{thm3}
Let $\Omega\subset \R^n$ be a bounded open subset with Lipschitz boundary $\pt \Omega$. Let $f:\R^n\setminus \Omega \rightarrow \R^m$ be an asymptotically flat map. Assume that $f$ is constant along each connected component of $\Sigma=\pt \Omega$. Let $M$ be the graph of $f$ with its natural metric. Then
\begin{equation*}
m_2=\f{1}{2}c_2(n)\int_M (L_2+L_2^{\perp})\f{1}{\sqrt{G}}d\mu_M+3c_2(n)\int_\Sigma (\f{|Df|^2}{1+|Df|^2})^2 H_3 d\mu_{\Sigma},
\end{equation*}
where $|Df|^2=|Df^1|^2+\cdots+|Df^m|^2$ and $H_3$ is the $3$-th mean curvature of the hypersurface $\Sigma$ in $\R^n$.
\end{thm}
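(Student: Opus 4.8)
The plan is to repeat the divergence computation that proves Theorem~\ref{thm2}, but on the domain $\R^n\setminus\Omega$ rather than on all of $\R^n$, so that the integration by parts now produces a boundary term along $\Sigma=\pt\Omega$; the remaining work is to evaluate that boundary term. Recall that on a graph the induced metric is $g_{ij}=\delta_{ij}+f^\alpha_if^\alpha_j$, that $G=\det(\delta_{ij}+f^\alpha_if^\alpha_j)$, and that (via the Gauss equation, the ambient $\R^{n+m}$ being flat) the curvature of $g$, hence the tensor $P^{ijkl}$, is a polynomial in the second derivatives $f^\alpha_{ij}$ and rational in the first derivatives $f^\alpha_i$. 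The content of the proof of Theorem~\ref{thm2} is then the pointwise polynomial identity, in which all third-derivative terms of $f$ cancel,
\[
\pt_i\big(P^{ijkl}\pt_lg_{jk}\big)=\f{1}{2\sqrt G}\,(L_2+L_2^\perp),
\]
valid for any graph metric. Applying the divergence theorem on $(\R^n\setminus\Omega)\cap B_r(0)$, whose boundary is $S_r\cup\Sigma$, letting $r\to\infty$, and noting that the outward unit normal of this region restricted to $\Sigma$ is $-\nu$ with $\nu$ the outward unit normal of $\Omega$ in $\R^n$, we obtain
\[
m_2=\f12c_2(n)\int_M(L_2+L_2^\perp)\f{1}{\sqrt G}\,d\mu_M+c_2(n)\int_\Sigma\Phi\,d\mu_\Sigma,\qquad \Phi:=\big(P^{ijkl}\pt_lg_{jk}\,\nu_i\big)\big|_\Sigma .
\]

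To compute $\Phi$, fix $p\in\Sigma$ and an orthonormal frame $e_1,\dots,e_{n-1},e_n=\nu$ adapted to $\Sigma$. Because $f$ is constant on the component of $\Sigma$ through $p$, its tangential derivatives vanish there: $f^\alpha_a(p)=0$ for $a\le n-1$, while $v^\alpha:=f^\alpha_n(p)$ is unconstrained, so $|v|^2=|Df|^2$ and $G(p)=1+|Df|^2$. Differentiating $f^\alpha|_\Sigma\equiv\mathrm{const}$ twice along $\Sigma$ and using $e_n=\nu$ gives $f^\alpha_{ab}(p)=-v^\alpha h_{ab}$, where $h_{ab}$ is the second fundamental form of $\Sigma\subset\R^n$ with respect to $\nu$. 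The derivatives $f^\alpha_{an}$ and $f^\alpha_{nn}$ are \emph{not} fixed by the hypothesis, so the key point is that they do not survive in $\Phi$: in $\pt_lg_{jk}=f^\alpha_{lj}f^\alpha_k+f^\alpha_jf^\alpha_{lk}$ each such derivative always appears multiplied by a first derivative carrying a free index, and after contracting against $\nu_i=\delta_{in}$ and using that $P^{ijkl}$ is antisymmetric in $[ij]$ and in $[kl]$ and symmetric under interchange of the two index pairs, all terms containing $f^\alpha_{an}$ or $f^\alpha_{nn}$ cancel.

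Substituting $f^\alpha_k(p)=v^\alpha\delta_{kn}$ and $f^\alpha_{ab}(p)=-v^\alpha h_{ab}$ into what is left of $\Phi$ produces a universal scalar in $v^\alpha$ and $h_{ab}$ which, by a degree count, is quartic in $v$, cubic in $h$, and divided by $(1+|v|^2)^2$. Since only the single normal direction determined by $v^\alpha$ occurs along $\Sigma$, so that the genuinely ``normal bundle'' quantities making up $L_2^\perp$, which vanish for flat normal bundle, play no role here, the only admissible cubic scalar built from $h_{ab}$ is the third elementary symmetric function of its principal curvatures, namely $H_3$. Hence $\Phi=c\,\big(\f{|Df|^2}{1+|Df|^2}\big)^2H_3$ for a universal constant $c$, and $c=3$ is pinned down by testing the identity on a radial model, for instance $\Sigma=S^{n-1}_{r_0}$ together with a rotationally symmetric graphing function whose $L_2$ and $L_2^\perp$ vanish, for which $m_2$, $H_3$, and $|Df|$ are all explicit. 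This matches the expected pattern: for $m_1$ the bulk density is the scalar curvature and the boundary density is $H=H_1$, and for $m_2$ the bulk density is $L_2$ (order two Gauss--Bonnet) and the boundary density is the order three $H_3$. Combining everything yields the formula of Theorem~\ref{thm3}.

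The main obstacle is the cancellation claimed in the second step, together with the ensuing reorganisation of the surviving terms: this is a long but mechanical tensor computation, the arbitrary codimension analogue of Lam's codimension one calculation for $m_1$ and of Ge--Wang--Wu's for $m_2$ in codimension one, the new features being the $m$ normal components $v^\alpha$ and the use of the flat normal bundle vanishing of $L_2^\perp$ (from Theorem~\ref{thm2}) to kill spurious curvature terms. A minor caveat is that $\pt\Omega$ is only assumed Lipschitz, so the identity should be read on the regular part of $\Sigma$, where $H_3$ makes sense, or first established for smooth $\Sigma$ and then extended by approximation.
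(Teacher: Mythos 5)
Your overall strategy is the same as the paper's: integrate the identity of Proposition \ref{prop-mass} over $(\R^n\setminus\Omega)\cap B_r$, let $r\to\infty$, and evaluate the resulting boundary integrand at a point of $\Sigma$ in an adapted frame where $f^\alpha_a=0$, $f^\alpha_n=v^\alpha$, $f^\alpha_{ab}=\pm v^\alpha h_{ab}$. Two remarks on the set-up: the pointwise identity is $\pt_i(P^{ijkl}\pt_l g_{jk})=\frac12(L_2+L_2^{\perp})$ with no $1/\sqrt G$ factor --- the $1/\sqrt G$ enters only when $d\mu_{\R^n}$ is rewritten as $\frac{1}{\sqrt G}\,d\mu_M$, so your displayed version is inconsistent with your own final formula; on the other hand, your observation that the antisymmetries of $P$ kill the $f^\alpha_jf^\alpha_{lk}$ part of $\pt_lg_{jk}$ and, after the contractions $\nu_i=\pm\delta_{in}$ and $f^\alpha_k=v^\alpha\delta_{kn}$, force both free indices of the surviving second derivative to be tangential is correct, and is tidier than the paper's term-by-term cancellation of the $f^\alpha_{1a}$, $f^\alpha_{11}$ contributions.

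The key step, however, is not established. First, the unconstrained derivatives $f^\alpha_{an}$, $f^\alpha_{nn}$ have not actually been eliminated: the boundary density is $\pm v^\alpha P^{nanb}f^\alpha_{ab}$, and $P^{nanb}$ itself still contains them, because at the boundary point $R_{nanb}=U^{\alpha\beta}(f^\alpha_{nn}f^\beta_{ab}-f^\alpha_{na}f^\beta_{nb})$, and the Ricci and scalar pieces of $P$ involve contractions such as $g^{nn}R_{nanb}$, $R_{nn}$ and $R$. Showing that these contributions cancel in the particular combination $P^{nanb}h_{ab}$ is precisely the content of the paper's computation (substituting $R^{ab}=R^{acbc}+R^{a1b1}(1+|Df|^2)$, $R^{11}=R^{a1a1}$, $R=2R^{1a1a}(1+|Df|^2)+R^{acac}$ and checking that all $R^{a1b1}$ terms drop out); your antisymmetry argument alone does not give this. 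Second, even granting that the density is a universal invariant of $(v,h)$ of the stated degrees, the assertion that the only admissible cubic scalar in $h_{ab}$ is $H_3$ is false: the cubic $O(n-1)$-invariants of a symmetric matrix form a three-dimensional space spanned by $H^3$, $H\,\mathrm{tr}(h^2)$ and $\mathrm{tr}(h^3)$, of which $H_3$ is one particular combination, and each could a priori appear with its own function of $|v|^2$. For the same reason the proposed normalization by a single rotationally symmetric model cannot pin down the answer: on an umbilic sphere all three invariants are proportional, so one test case yields one linear relation among three unknown coefficients. Thus the identity $P^{ijkl}f^\alpha_{lj}f^\alpha_k\nu_i=-3\bigl(\frac{|Df|^2}{1+|Df|^2}\bigr)^2H_3$, which in the paper is obtained by explicitly evaluating the six pieces of $P$ in the adapted frame and then using the Gauss equations $R^{abcd}=\frac{|Df|^2}{1+|Df|^2}\hat R^{abcd}$ together with $(\hat R^{ab}-\frac12\hat R h^{ab})A_{ab}=-3H_3$, is exactly the ``long but mechanical tensor computation'' you defer; since it is the heart of Theorem \ref{thm3}, the proposal as written does not prove the theorem. (Your caveat about the Lipschitz boundary is reasonable and consistent with the paper's treatment.)
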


If $\Sigma$ is in the level set of $f$, then $\Sigma$ can be identified with its graph $\{(x,f(x))|x\in\Sigma\}$. Then the mean curvature $\hat{H}$ of $f(\Sigma)$ in $(M,g)$ and the mean curvature $H$ of $\Sigma$ in $\R^n$ are related by (see section 4)
\begin{align*}
\hat{H}=\frac 1{\sqrt{1+|Df|^2}}H.
\end{align*}
Therefore if $|Df(x)|\rightarrow \infty$ as $x\rightarrow \Sigma=\pt\Omega$, the graph of $\Sigma$ is a horizon in $(M,g)$.

\begin{thm}\label{thm4}
Let $\Omega\subset\R^n$ be an open subset. Let $f:\R^n \setminus \Omega \rightarrow \R^m$ be a continuous map that is constant along each connected component of the boundary $\Sigma=\pt \Omega$ and asymptotically flat in $\R^n\setminus \bar{\Omega}$. Assume that the graph $M$ of $f$ extends $C^2$ to its boundary $\pt M$. Assume further that along each connected component $\Sigma_i$ of $\pt M$, the manifold $\bar{M}$ is tangent to the cylinder $\Sigma \times l_i$, where $l_i$ is a straight line of $\R^m$. Then
\begin{equation*}
m_2=\f{1}{2}c_2(n)\int_M (L_2+L_2^{\perp})\f{1}{\sqrt{G}}d\mu_M+3c_2(n)\int_\Sigma  H_3 d\mu_{\Sigma},
\end{equation*}
where $H_3$ is the $3$-th mean curvature of the embedding $\Sigma$ in $\R^n$.
\end{thm}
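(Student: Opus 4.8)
The plan is to derive Theorem~\ref{thm4} as a limiting case of Theorem~\ref{thm3}: one approximates the vertical graph $\bar M$ by a family of nonsingular graphs over domains slightly larger than $\Omega$, applies Theorem~\ref{thm3} to each, and lets the approximation degenerate. This is the graphic counterpart of the horizon arguments of Lam \cite{Lam} and Ge--Wang--Wu \cite{GWW} in codimension one and of Mirandola--Vit\'orio \cite{MV} for the ADM mass in arbitrary codimension.

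First I would exploit the hypotheses near $\partial M$. Fix a boundary component $\Sigma_i$, let $\hat l_i$ be a unit vector spanning $l_i$, and let $q_i=f|_{\Sigma_i}\in\R^m$ (which lies on $l_i$). Since $\bar M$ is $C^2$ up to the boundary and tangent to the cylinder $\Sigma_i\times l_i$ along $\Sigma_i$ (so in particular $\Sigma_i$ is itself $C^2$), near $\Sigma_i$ the graph $M$ may be written as a graph over a collar of $\Sigma_i$ inside the cylinder: for $s$ large there is a point of $M$ of the form $\bigl(\psi_i(\sigma,s),\,q_i+s\hat l_i+\eta_i(\sigma,s)\bigr)$, $\sigma\in\Sigma_i$, with $\eta_i(\cdot,s)\perp\hat l_i$, where $\psi_i(\cdot,s)$ converges to the inclusion $\Sigma_i\hookrightarrow\R^n$ and $\eta_i(\cdot,s)\to 0$, both in $C^2$ as $s\to\infty$. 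Writing $h_i:=\langle f-q_i,\hat l_i\rangle$ on $\R^n\setminus\bar\Omega$ one gets $h_i(\psi_i(\sigma,s))=s$, so for $t$ large the level set $\Sigma_{i,t}:=\{h_i=t\}=\psi_i(\Sigma_i,t)$ is a $C^2$ embedded hypersurface of $\R^n$ that $C^2$-converges to $\Sigma_i$ as $t\to\infty$; hence its principal curvatures, its third mean curvature $H_3$, and its area element all converge to those of $\Sigma_i$. Moreover every $v\in T\Sigma_{i,t}$ satisfies $\langle Df(v),\hat l_i\rangle=v(h_i)=0$, while the tangency of $\bar M$ to the cylinder forces the component of $Df(v)$ orthogonal to $\hat l_i$ to be $o(1)$; thus the tangential derivative of $f$ along $\Sigma_{i,t}$ tends to $0$ uniformly, whereas $|Df|\to\infty$ uniformly on $\Sigma_{i,t}$ and $\bigl(\tfrac{|Df|^2}{1+|Df|^2}\bigr)^2\to 1$ uniformly there.

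Next, for $t$ large let $\Omega_t\subset\R^n$ be the bounded open set obtained from $\Omega$ by adjoining the thin shell $\{h_i>t\}$ near each $\Sigma_i$, so that $\partial\Omega_t=\bigcup_i\Sigma_{i,t}$, $\bar\Omega\subset\Omega_t$, and $\Omega_t\downarrow\Omega$. Then $f|_{\R^n\setminus\Omega_t}$ is asymptotically flat (its behaviour at infinity is unchanged), and by the previous paragraph it is asymptotically constant, with asymptotically vanishing tangential gradient, along each component of $\partial\Omega_t$. Applying Theorem~\ref{thm3} on $\Omega_t$ — after, if necessary, a small modification of $f$ in a shrinking collar of $\partial\Omega_t$ making it genuinely constant there (when $m\ge 2$, where $\eta_i$ need not vanish), or equivalently via the version of Theorem~\ref{thm3}'s argument that tracks the boundary term for approximately-constant boundary data — yields
\begin{equation*}
m_2=\tfrac12 c_2(n)\int_{M_t}(L_2+L_2^\perp)\tfrac1{\sqrt G}\,d\mu_M+3c_2(n)\int_{\Sigma_t}\Bigl(\tfrac{|Df|^2}{1+|Df|^2}\Bigr)^2 H_3\,d\mu_{\Sigma_t}+o(1),
\end{equation*}
where $M_t$ is the graph over $\R^n\setminus\Omega_t$, which exhausts $M$, and the $o(1)$ comes from the collar modification.

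Finally I would let $t\to\infty$. The left side is fixed, being a geometric invariant determined by the unchanged geometry at infinity. By the convergences above the boundary integral tends to $3c_2(n)\int_\Sigma H_3\,d\mu_\Sigma$, and since $M_t$ exhausts $M$ the bulk integral tends to $\int_M(L_2+L_2^\perp)\tfrac1{\sqrt G}\,d\mu_M$ — monotonely when $L_2+L_2^\perp\ge0$ (in particular for a flat normal bundle with $L_2\ge0$), and in general under the integrability hypothesis built into the definition of $m_2$. Passing to the limit gives the asserted identity. I expect the only real difficulty to be the near-boundary analysis of the second and third paragraphs: extracting the cylinder-graph parametrization and the $C^2$-convergence $\Sigma_{i,t}\to\Sigma_i$ from the tangency hypothesis, and checking that the approximating boundary data behaves to leading order exactly like genuinely constant data, so that no boundary contribution survives in the limit beyond $3c_2(n)\int_\Sigma H_3\,d\mu_\Sigma$; the passage to the limit itself is then routine.
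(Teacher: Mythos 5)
Your overall strategy --- approximate $\Sigma$ by nearby hypersurfaces, apply Theorem~\ref{thm3} there, and pass to the limit using that the graph becomes vertical at $\Sigma$ so that $\bigl(\tfrac{|Df|^2}{1+|Df|^2}\bigr)^2\to 1$ --- is exactly the reduction the paper intends (its own proof consists only of showing $|Df|\to\infty$ at $\Sigma$ and treating the rest as immediate from the boundary computation of Theorem~\ref{thm3}). But the near-boundary parametrization on which your whole scheme rests contradicts the hypotheses. Since $f$ is assumed continuous on $\R^n\setminus\Omega$ and equal to the constant $q_i$ on $\Sigma_i$, the height function $h_i=\langle f-q_i,\hat l_i\rangle$ tends to $0$, not to $\infty$, as $x\to\Sigma_i$: the boundary $\pt M$ sits at the finite height $q_i$, and tangency to $\Sigma\times l_i$ only says that the tangent planes of $\bar M$ contain the vertical direction there. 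Points of the form $\bigl(\psi_i(\sigma,s),\,q_i+s\hat l_i+\eta_i(\sigma,s)\bigr)$ with $s\to\infty$ and $\psi_i(\cdot,s)\to\Sigma_i$ would force $f$ to blow up at $\Sigma_i$, which is excluded; consequently the level sets $\{h_i=t\}$ for $t$ large are not close to $\Sigma_i$, the sets $\{h_i>t\}$ are not thin collars, and your $\Omega_t$ does not decrease to $\Omega$. What you have described is an asymptotically cylindrical end, not a $C^2$ manifold with boundary tangent to a cylinder, so the convergence statements (of $H_3$, of the area elements, of $|Df|\to\infty$ on $\Sigma_{i,t}$) that you extract from this parametrization do not follow as written.

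The repair is to run the same collar argument with the parameter going to $0^+$: by the $C^2$ extension and the tangency, near $\pt M$ the manifold $\bar M$ is a graph over $\Sigma_i\times[0,\epsilon)$ inside the cylinder, the projected slices converge to $\Sigma_i$ in $C^2$, and verticality gives $|Df|\to\infty$ there. Alternatively one can prove the key claim as the paper does, by contradiction: if $|Df(p)|$ were finite at some $p\in\Sigma_i$, then $\{\pt_i=(e_i;f^\alpha_i e_\alpha)\}$ and the cylinder basis $\{(0;e_1),(e_2;0),\cdots,(e_n;0)\}$ would both have to span $T_pM$, which is impossible. Be aware also that your step of modifying $f$ to be genuinely constant on $\pt\Omega_t$ ``up to $o(1)$'' is not innocuous: such a modification changes the graph and hence both integrals appearing in Theorem~\ref{thm3} (even though it does not change $m_2$), and the second derivatives of $f$ blow up at $\Sigma$, so the error produced by not-exactly-constant boundary data must actually be estimated rather than asserted to vanish. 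That estimate is precisely the part the paper leaves implicit, and it is where the real work in a fully rigorous limiting argument lies; as submitted, your proposal neither fixes the parametrization nor supplies this estimate.
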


The proof of Theorem \ref{thm4} is just by checking $|Df(x)|\rightarrow\infty$ as $x\rightarrow\Sigma$.

A hypersurface $\Sigma\subset\R^n$ is called $3$-convex if its mean curvature $H$, $2$-th mean curvature $H_2$ and $3$-th mean curvature $H_3$ are positive on $\Sigma$. As in \cite{Lam,GWW,HW2,MV}, by applying the Alexandrov-Fenchel inequality due to Guan-Li \cite{GL} to Theorem \ref{thm4}, we conclude that
\begin{thm}[Penrose inequality]\label{thm-penrose}
Under the hypothesis of Theorem \ref{thm4}, and we assume that $M$ has nonnegative $L_2$ and flat normal bundle. If each connected component of the boundary $\Sigma=\pt \Omega$ is $3$-convex and star-shaped, then
\begin{align}\label{eq-penrose}
  m_2\geq \frac 14(\frac {|\Sigma|}{\omega_{n-1}})^{\frac {n-4}{n-1}}.
\end{align}
Moreover, the equality implies $L_2$ vanishes identically on $M$ and $\Sigma$ is a sphere.
\end{thm}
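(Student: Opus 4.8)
The plan is to combine the integral identity of Theorem~\ref{thm4} with the Alexandrov--Fenchel inequality for $k$-convex star-shaped hypersurfaces proved by Guan--Li \cite{GL}, and then to chase constants. The heavy geometric lifting has already been done in Theorem~\ref{thm4}, so what remains is to throw away a nonnegative term and bound the boundary integral from below.

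First I would discard the interior term. By the remark following Theorem~\ref{thm2}, flatness of the normal bundle gives $L_2^{\perp}\equiv 0$; since $g_{ij}=\delta_{ij}+f_i^{\alpha}f_j^{\alpha}$ we have $\sqrt G\geq 1$, so $\f1{\sqrt G}$ is a positive bounded function; and $L_2\geq 0$ by hypothesis. Hence the bulk integral in Theorem~\ref{thm4} is nonnegative, and we obtain
\begin{equation*}
m_2\ \geq\ 3\,c_2(n)\int_{\Sigma}H_3\,d\mu_{\Sigma}.
\end{equation*}
Moreover $3$-convexity forces $H_3>0$ on each component of $\Sigma$, so the right-hand side is strictly positive and the hypotheses of the Alexandrov--Fenchel inequality are met.

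Next, write $\Sigma=\bigcup_i\Sigma_i$ where each $\Sigma_i$ is a closed, star-shaped, $3$-convex hypersurface in $\R^n$. Applying Guan--Li's inequality \cite{GL} to each $\Sigma_i$ (with $H_3=\sigma_3(\kappa)$ the third elementary symmetric function of the principal curvatures, the normalization for which a unit sphere has $H_3=\binom{n-1}{3}$; any other normalization is absorbed into the coefficient $3c_2(n)$) gives
\begin{equation*}
\int_{\Sigma_i}H_3\,d\mu\ \geq\ \binom{n-1}{3}\,\omega_{n-1}\Big(\f{|\Sigma_i|}{\omega_{n-1}}\Big)^{\f{n-4}{n-1}},
\end{equation*}
with equality precisely when $\Sigma_i$ is a round sphere. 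Since $n\geq 5$ we have $0<\f{n-4}{n-1}<1$, so $t\mapsto t^{(n-4)/(n-1)}$ is subadditive on $[0,\infty)$; summing over $i$ and using $|\Sigma|=\sum_i|\Sigma_i|$ yields $\sum_i(|\Sigma_i|/\omega_{n-1})^{(n-4)/(n-1)}\geq(|\Sigma|/\omega_{n-1})^{(n-4)/(n-1)}$. Plugging in $c_2(n)=\f1{2(n-1)(n-2)(n-3)\omega_{n-1}}$, the coefficient $3\,c_2(n)\binom{n-1}{3}\omega_{n-1}$ simplifies to exactly $\f14$, and \eqref{eq-penrose} follows.

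For the rigidity statement, equality in \eqref{eq-penrose} forces equality in every step. Equality in the bulk estimate, together with $L_2\geq 0$, $\f1{\sqrt G}>0$ and smoothness of $f$, forces $L_2\equiv 0$ on $M$; equality in the subadditivity of $t^{(n-4)/(n-1)}$ forces $\Sigma$ to be connected; and equality in Guan--Li's inequality then forces that single component to be a round sphere. I do not expect a serious obstacle here: the only points needing care are (i) fixing the normalization of $H_3$ so that the numerical constant comes out to be exactly $\f14$ — equivalently, checking the model case where $\Sigma$ is a coordinate sphere of $\R^n$ — and (ii) the multi-component rigidity, where the strictness of the subadditivity of $t^{(n-4)/(n-1)}$ for $n\geq 5$ is precisely what rules out disconnected horizons in the equality case.
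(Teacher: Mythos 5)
Your proposal is correct and follows essentially the same route as the paper: drop the nonnegative bulk term (using $L_2^{\perp}\equiv 0$ for flat normal bundle and $L_2\geq 0$), apply the Guan--Li Alexandrov--Fenchel inequality to each component of $\Sigma$, sum via the subadditivity of $t\mapsto t^{(n-4)/(n-1)}$, and read off rigidity from the equality cases. Your explicit verification of the normalization of $H_3$ and of the constant $3c_2(n)\binom{n-1}{3}\omega_{n-1}=\frac14$ is a welcome but inessential addition to the paper's argument.
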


The rest of this paper is organized as follows. In section 2, we give some notations and basic formulas. In section 3, we first prove proposition \ref{prop-mass} which is the key ingredient to prove our main theorems. The crucial step of proving proposition \ref{prop-mass} is checking that $\frac 12L_2^{\bot}$ can be represented as normal curvature terms. In the sections 4 and 5, we will use the similar idea of Ge-Wang-Wu \cite{GWW} to complete the proof of  positive mass theorem and Penrose inequality for $m_2$. In the last section, we give a generalization of Ge-Wang-Wu's  \cite{GWW1} result on Positive mass theorem and Penrose inequality in the Einstein-Gauss-Bonnet gravity to arbitrary codimension graph.

\section{Preliminaries}

Recall that on a complete Riemannian manifold $(M,g)$, the Gauss-Bonnet curvature $L_2$ is defined as
\begin{align}
    L_2=P^{ijkl}R_{ijkl},
\end{align}
where the (0,4) tensor $P$ is given by \eqref{def-P}. Note that $P$ has the same symmetric property as the Riemannian curvature tensor, i.e.,
\begin{align}\label{P-symmetri}
    P^{ijkl}=-P^{jikl}=-P^{ijlk}=P^{klij}.
\end{align}
$P$ also satisfies the divergence-free property (cf. \cite{GWW,Da}), i.e.,
\begin{align}\label{P-divfree}
    \nabla_iP^{ijkl}=\nabla_jP^{ijkl}=\nabla_kP^{ijkl}=\nabla_lP^{ijkl}=0.
\end{align}

We now suppose $M=\{(x,f(x))|x\in\R^n\}$ is the graph of a smooth asymptotically flat map $f:\R^n\rightarrow\R^m$. The  vectors  $\pt_i=(e_i,f_i^{\alpha}e_{\alpha})$ are tangential and the vectors $\eta^{\alpha}=(-Df^{\alpha},e_{\alpha})$ are normal to $M$. In this paper, we will use the convention on the indices:
\begin{align*}
    1\leq i,j,k,l,\cdots\leq n\qquad \textrm{and} \quad 1\leq \alpha,\beta,\gamma,\mu,\nu,\cdots\leq m.
\end{align*}
The induced metric on $M$ is
\begin{align*}
    g_{ij}=\delta_{ij}+f_i^{\alpha}f_j^{\alpha}.
\end{align*}
We will denote the matrix $U$ by (cf.\cite{MV})
\begin{align*}
    U_{\alpha\beta}=\langle \eta^{\alpha},\eta^{\beta}\rangle=\delta_{\alpha\beta}+\langle Df^{\alpha},Df^{\beta}\rangle,
\end{align*}
with the inverse matrix $U^{\alpha\beta}$. Then
\begin{align*}
    &\delta_{\gamma\beta}=U^{\gamma\beta}+U^{\alpha\gamma}\langle Df^{\alpha},Df^{\beta}\rangle,\\
    &g^{ij}=\delta_{ij}-f_i^{\alpha}f_j^{\beta}U^{\alpha\beta}.
\end{align*}
Denote $f^{\alpha}, \alpha=1,\cdots,m$ the components of $f$. The gradient of $f^{\alpha}:M\rightarrow\R$ is
\begin{align*}
    &\nabla f^{\alpha}=g^{jk}f_k^{\alpha}\pt_j=U^{\alpha\gamma}f_i^{\gamma}\pt_i.
\end{align*}
The shape operator $A^{\alpha}$ with respect to $\eta^{\alpha}$
\begin{align}
    A^{\alpha}\pt_i=-(\bar{\nabla}_{\pt_i}\eta^{\alpha})^{\top}=f^{\alpha}_{ik}g^{kj}\pt_j.
\end{align}
Therefore
\begin{align}
    \langle A^{\mu}\pt_i,A^{\gamma}\pt_k\rangle=g^{lr}f_{il}^{\mu}f_{kr}^{\gamma}\quad \textrm{and} \quad f^{\alpha}_{ik}=\langle A^{\alpha}\pt_i,\pt_k\rangle.
\end{align}
The shape operator is symmetric:
\begin{align}\label{eq-shape}
    \langle A^{\alpha}X,Y\rangle=\langle X,A^{\alpha}Y\rangle, \quad 1\leq \alpha\leq m
\end{align}
for any tangent $X,Y$. The second fundamental form
\begin{equation}
B(\pt_i,\pt_j)=\langle A^{\alpha}\pt_i,\pt_j\rangle U^{\alpha\beta}\eta^{\beta}=f^{\alpha}_{ij}U^{\alpha\beta}\eta^{\beta}.
\end{equation}

\begin{lem}\label{lem-formulas}
The curvature tensor $R_{ijkl}$, the Gauss-Bonnet curvature $L_2$ and the Christoffel symbols $\Gamma_{ij}^k$ have the following expressions:
\begin{align*}
    R_{ijkl}=&U^{\alpha\beta}(f^{\alpha}_{ik}f^{\beta}_{jl}-f^{\alpha}_{il}f^{\beta}_{jk}),\\
    L_2=&P^{ijkl}R_{ijkl}=2P^{ijkl}U^{\alpha\beta}f^{\alpha}_{ik}f^{\beta}_{jl},\\
    \Gamma_{ij}^k=&U^{\alpha\beta}f_k^{\alpha}f_{ij}^{\beta}.
\end{align*}
\end{lem}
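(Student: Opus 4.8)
The plan is to compute everything directly from the embedding $M=\{(x,f(x))\}\subset\R^{n+m}$, using that the ambient space is flat. I would begin with the Christoffel symbols. Since $\pt_i=(e_i,f_i^\alpha e_\alpha)$, differentiating gives $\bar\nabla_{\pt_i}\pt_j=(0,f_{ij}^\alpha e_\alpha)$, and the Christoffel symbols of the induced connection are obtained by projecting onto the tangent space: $\Gamma_{ij}^k=g^{kl}\la\bar\nabla_{\pt_i}\pt_j,\pt_l\ra$. Now $\la(0,f_{ij}^\alpha e_\alpha),\pt_l\ra=f_{ij}^\alpha f_l^\alpha$, so $\Gamma_{ij}^k=g^{kl}f_{ij}^\alpha f_l^\alpha$; rewriting $g^{kl}f_l^\alpha$ using the identity $g^{ij}=\delta_{ij}-f_i^\gamma f_j^\beta U^{\gamma\beta}$ together with the relation $\delta_{\gamma\beta}=U^{\gamma\beta}+U^{\alpha\gamma}\la Df^\alpha,Df^\beta\ra$ recorded above yields $g^{kl}f_l^\alpha=U^{\alpha\beta}f_k^\beta$, giving the stated formula $\Gamma_{ij}^k=U^{\alpha\beta}f_k^\alpha f_{ij}^\beta$.

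Next I would derive the curvature formula from the Gauss equation. Since the ambient curvature vanishes, $R_{ijkl}=\la B(\pt_i,\pt_k),B(\pt_j,\pt_l)\ra-\la B(\pt_i,\pt_l),B(\pt_j,\pt_k)\ra$. Using $B(\pt_i,\pt_j)=f_{ij}^\alpha U^{\alpha\beta}\eta^\beta$ and $\la\eta^\beta,\eta^\delta\ra=U_{\beta\delta}$, one gets $\la B(\pt_i,\pt_k),B(\pt_j,\pt_l)\ra=f_{ik}^\alpha U^{\alpha\beta}U_{\beta\delta}U^{\delta\gamma}f_{jl}^\gamma=f_{ik}^\alpha U^{\alpha\gamma}f_{jl}^\gamma$, and antisymmetrizing in $k\leftrightarrow l$ gives $R_{ijkl}=U^{\alpha\beta}(f_{ik}^\alpha f_{jl}^\beta-f_{il}^\alpha f_{jk}^\beta)$. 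Finally, for $L_2=P^{ijkl}R_{ijkl}$, I substitute this expression and exploit the symmetry \eqref{P-symmetri}: the two terms $P^{ijkl}f_{ik}^\alpha f_{jl}^\beta U^{\alpha\beta}$ and $-P^{ijkl}f_{il}^\alpha f_{jk}^\beta U^{\alpha\beta}$ become equal after relabeling $k\leftrightarrow l$ and using $P^{ijkl}=-P^{ijlk}$ (and symmetry of $U$), so $L_2=2P^{ijkl}U^{\alpha\beta}f_{ik}^\alpha f_{jl}^\beta$.

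None of these steps presents a serious obstacle; the computation is entirely mechanical once the ambient flatness and the Gauss equation are invoked. The only point requiring a little care is the index bookkeeping in passing from $g^{kl}f_l^\alpha$ to $U^{\alpha\beta}f_k^\beta$ and, symmetrically, in simplifying the product $U^{\alpha\beta}U_{\beta\delta}U^{\delta\gamma}$ down to $U^{\alpha\gamma}$ — both amount to using that $U^{\alpha\beta}$ is the inverse of $U_{\alpha\beta}$ and the two displayed algebraic identities relating $U$, $U^{-1}$, and the $\la Df^\alpha,Df^\beta\ra$. I would present the Christoffel computation in full and then indicate the Gauss-equation and symmetrization steps compactly, since they follow the same pattern.
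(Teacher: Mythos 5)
Your proposal is correct and follows essentially the same route as the paper: the Gauss equation (with the contraction $U^{\alpha\beta}U_{\beta\delta}U^{\delta\gamma}=U^{\alpha\gamma}$) for $R_{ijkl}$, the antisymmetry $P^{ijkl}=-P^{ijlk}$ for $L_2$, and the identities relating $g^{ij}$, $U^{\alpha\beta}$ and $\langle Df^{\alpha},Df^{\beta}\rangle$ to reduce $g^{kl}f^{\alpha}_{ij}f^{\alpha}_l$ to $U^{\alpha\beta}f^{\alpha}_kf^{\beta}_{ij}$. The only cosmetic difference is that you reach $\Gamma_{ij}^k=g^{kl}f^{\alpha}_{ij}f^{\alpha}_l$ by projecting the ambient derivative $\bar\nabla_{\pt_i}\pt_j$, whereas the paper expands the coordinate formula $\frac 12 g^{kl}(\pt_ig_{jl}+\pt_jg_{il}-\pt_lg_{ij})$; both are equivalent and the remaining algebra is identical.
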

\proof
By Gauss equation, we have
\begin{align*}
    R_{ijkl}=&\langle B(\pt_i,\pt_k),B(\pt_j,\pt_l)\rangle -\langle B(\pt_i,\pt_l),B(\pt_j,\pt_k)\rangle \\
    =&U^{\alpha\beta}(f^{\alpha}_{ik}f^{\beta}_{jl}-f^{\alpha}_{il}f^{\beta}_{jk}).
\end{align*}
The Gauss-Bonnet curvature $L_2$
\begin{align*}
    L_2=P^{ijkl}R_{ijkl}=&P^{ijkl}U^{\alpha\beta}(f^{\alpha}_{ik}f^{\beta}_{jl}-f^{\alpha}_{il}f^{\beta}_{jk})\nonumber\\
    =&2P^{ijkl}U^{\alpha\beta}f^{\alpha}_{ik}f^{\beta}_{jl}.
\end{align*}
By the definition of $\Gamma_{ij}^k$,
\begin{align*}
    \Gamma_{ij}^k=&\frac 12g^{kl}(\pt_ig_{jl}+\pt_jg_{il}-\pt_lg_{ij})\\
    =&g^{kl}f^{\gamma}_{ij}f^{\gamma}_l\\
    =&(\delta_{kl}-f_k^{\alpha}f_l^{\beta}U^{\alpha\beta})f^{\gamma}_{ij}f^{\gamma}_l\\
    =&f^{\gamma}_{ij}f^{\gamma}_k-f_k^{\alpha}f^{\gamma}_{ij}f^{\gamma}_lf_l^{\beta}U^{\alpha\beta}\\
    =&f^{\gamma}_{ij}f^{\gamma}_k-f_k^{\alpha}f^{\gamma}_{ij}(\delta_{\alpha\gamma}-U^{\alpha\gamma})\\
    =&f_k^{\alpha}f^{\gamma}_{ij}U^{\alpha\gamma}.
\end{align*}
\endproof

The commutation of shape operators gives the normal curvature operator $R^{\bot}_{\alpha\beta}:TM\rightarrow TM$,
\begin{align}\label{eq-normal}
  R^{\bot}_{\alpha\beta}(X) =& (A^{\beta}A^{\alpha}-A^{\alpha}A^{\beta})X.
\end{align}
Thus,
\begin{align*}
    \langle R^{\bot}_{\alpha\beta}(X),Y\rangle =&\langle (A^{\beta}A^{\alpha}-A^{\alpha}A^{\beta})X,Y\rangle\\
    =&\langle A^{\alpha}X,A^{\beta}Y\rangle-\langle A^{\beta}X,A^{\alpha}Y\rangle
\end{align*}
for any tangent $X,Y$ and any $1\leq \alpha,\beta\leq m$. The Ricci operator $Rc:TM\rightarrow TM$ is given by
\begin{align}\label{eq-Ricci}
 \langle Rc(X),Y\rangle=&Ric (X,Y),
\end{align}
which is symmetric in $X$ and $Y$. We have the following lemma about the commutation of the Ricci operator $Rc$ and shape operator $A^{\alpha}$.
\begin{lem}
The Ricci operator $Rc$ and shape operator $A^{\alpha}$ commutes as following:
\begin{equation}\label{eq-commute}
  A^{\alpha} Rc-Rc A^{\alpha}=U^{\mu\nu}((Tr A^{\mu})R^{\bot}_{\nu\alpha}+R^{\bot}_{\alpha\mu}A^{\nu}+A^{\mu} R^{\bot}_{\alpha\nu}).
\end{equation}
\end{lem}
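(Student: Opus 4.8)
The plan is to compute both sides explicitly in terms of the shape operators $A^\mu$ using the graph formulas from Lemma \ref{lem-formulas}, reducing everything to the second fundamental form data $f^\alpha_{ij}$ and the matrix $U^{\alpha\beta}$. First I would express the Ricci operator $Rc$ in terms of the shape operators. Contracting the Gauss equation formula $R_{ijkl}=U^{\alpha\beta}(f^\alpha_{ik}f^\beta_{jl}-f^\alpha_{il}f^\beta_{jk})$ in the appropriate indices gives $\mathrm{Ric}(\pt_i,\pt_l)=g^{jk}R_{ijkl}=U^{\alpha\beta}\big((\mathrm{Tr}\,A^\alpha)f^\beta_{il}-\langle A^\alpha\pt_i,A^\beta\pt_l\rangle\big)$, so that as an operator
\begin{equation*}
Rc=U^{\alpha\beta}\big((\mathrm{Tr}\,A^\alpha)A^\beta-A^\alpha A^\beta\big).
\end{equation*}
Here I use $f^\beta_{il}=\langle A^\beta\pt_i,\pt_l\rangle$ and $\langle A^\alpha\pt_i,A^\beta\pt_l\rangle=\langle (A^\alpha A^\beta)\pt_i,\pt_l\rangle$, together with the symmetry \eqref{eq-shape} of the shape operators. (One should double-check the symmetry of the quadratic term: $U^{\alpha\beta}A^\alpha A^\beta$ is symmetric as an operator because $U$ is symmetric and $\langle A^\alpha A^\beta X,Y\rangle=\langle A^\beta X,A^\alpha Y\rangle$, which is symmetric under the simultaneous swap $X\leftrightarrow Y$, $\alpha\leftrightarrow\beta$.)

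Next I would substitute this expression into the commutator $A^\alpha Rc-Rc\,A^\alpha$. The trace terms contribute $U^{\mu\nu}(\mathrm{Tr}\,A^\mu)(A^\alpha A^\nu-A^\nu A^\alpha)=U^{\mu\nu}(\mathrm{Tr}\,A^\mu)R^\perp_{\nu\alpha}$ directly from the definition \eqref{eq-normal} of the normal curvature operator. The remaining terms are $-U^{\mu\nu}(A^\alpha A^\mu A^\nu-A^\mu A^\nu A^\alpha)$, which I would rewrite by inserting and cancelling the middle term $A^\mu A^\alpha A^\nu$:
\begin{equation*}
A^\alpha A^\mu A^\nu-A^\mu A^\nu A^\alpha=(A^\alpha A^\mu-A^\mu A^\alpha)A^\nu+A^\mu(A^\alpha A^\nu-A^\nu A^\alpha)=R^\perp_{\mu\alpha}A^\nu+A^\mu R^\perp_{\nu\alpha}.
\end{equation*}
Relabelling and using the antisymmetry $R^\perp_{\mu\alpha}=-R^\perp_{\alpha\mu}$ (and the symmetry of $U^{\mu\nu}$ to swap $\mu\leftrightarrow\nu$ where needed) matches this against the claimed right-hand side $U^{\mu\nu}(R^\perp_{\alpha\mu}A^\nu+A^\mu R^\perp_{\alpha\nu})$.

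The main obstacle I anticipate is purely bookkeeping: keeping the index placement on $U^{\alpha\beta}$ consistent, correctly tracking the signs coming from $R^\perp_{\alpha\beta}=-R^\perp_{\beta\alpha}$, and making sure the symmetrizations in $\mu,\nu$ are applied legitimately (they are, since every occurrence of $U^{\mu\nu}$ is contracted symmetrically against the remaining factors once one uses that $R^\perp_{\alpha\beta}$ and $A^\alpha$ appear in the combinations dictated by the RHS). No deep geometric input beyond the Gauss equation and the definitions already recorded is needed; the identity is essentially an algebraic consequence of writing the curvature operators in terms of the $A^\alpha$ and expanding a triple-product commutator.
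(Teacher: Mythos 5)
Your proof is correct and follows essentially the same route as the paper's: express the Ricci operator as $Rc=U^{\mu\nu}\big((\mathrm{Tr}\,A^{\mu})A^{\nu}-A^{\mu}A^{\nu}\big)$ from the contracted Gauss equation, expand the commutator $A^{\alpha}Rc-Rc\,A^{\alpha}$, and regroup the triple products into normal curvature operators via \eqref{eq-normal}. The only nitpick is an index-placement slip in your intermediate contraction (with the paper's sign convention the Ricci tensor is obtained as $g^{jk}R_{jikl}$ rather than $g^{jk}R_{ijkl}$), which does not affect your final operator formula or the rest of the argument.
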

\proof
By \eqref{eq-normal} and the definition of Ricci operator, we have
\begin{align*}
  A^{\alpha}Rc-Rc A^{\alpha} =& U^{\mu\nu}\left(A^{\alpha}((Tr A^{\mu})A^{\nu}-A^{\mu}A^{\nu})- ((Tr A^{\mu})A^{\nu}-A^{\mu}A^{\nu})A^{\alpha}\right)\\
  =& U^{\mu\nu}Tr A^{\mu}(A^{\alpha}A^{\nu}-A^{\nu}A^{\alpha})+U^{\mu\nu}(A^{\mu}A^{\nu}A^{\alpha}-A^{\alpha}A^{\mu}A^{\nu})\\
  =&U^{\mu\nu}(Tr A^{\mu})R^{\bot}_{\nu\alpha}+U^{\mu\nu}\left(A^{\mu}(A^{\alpha}A^{\nu}+R^{\bot}_{\alpha\nu})-A^{\alpha}A^{\mu}A^{\nu}\right)\\
  =&U^{\mu\nu}(Tr A^{\mu})R^{\bot}_{\nu\alpha}+U^{\mu\nu}(R^{\bot}_{\alpha\mu}A^{\nu}+A^{\mu}R^{\bot}_{\alpha\nu}),
\end{align*}
which gives the commutation formula \eqref{eq-commute}.
\endproof

In the rest of the paper, we will denote the right-hand side of \eqref{eq-commute} by $T^{\bot}_{\alpha}$.

\section{Positive mass theorem for $m_2$}
We first prove the following proposition, which is a key ingredient for proving our theorems.
\begin{prop}\label{prop-mass}
\begin{align}\label{key-equ1}
    \pt_i(P^{ijkl}\pt_lg_{jk})=&\frac 12L_2+ \frac 12L_2^{\bot},
\end{align}
where $ L_2^{\bot}$ can be expressed as some normal curvature terms (see \eqref{eq-L2} for the explicit expression). In particular, when the graph $M$ of $f$ has flat normal bundle, we have
\begin{align}\label{key-equ}
    \pt_i(P^{ijkl}\pt_lg_{jk})=&\frac 12L_2.
\end{align}
\end{prop}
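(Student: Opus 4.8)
The plan is to compute $\pt_i(P^{ijkl}\pt_l g_{jk})$ directly, expanding everything in terms of the map $f$ and its derivatives and using the formulas collected in Lemma~\ref{lem-formulas}. First I would substitute $\pt_l g_{jk}=f^\alpha_{lj}f^\alpha_k+f^\alpha_j f^\alpha_{lk}$, and by the symmetries \eqref{P-symmetri} of $P$ (antisymmetry in $j\leftrightarrow k$) note that these two terms contribute equally, so $P^{ijkl}\pt_l g_{jk}=2P^{ijkl}f^\alpha_{lj}f^\alpha_k$. Then I would apply $\pt_i$, producing a term $2(\pt_i P^{ijkl})f^\alpha_{lj}f^\alpha_k$ and a term $2P^{ijkl}\pt_i(f^\alpha_{lj}f^\alpha_k)=2P^{ijkl}f^\alpha_{ilj}f^\alpha_k+2P^{ijkl}f^\alpha_{lj}f^\alpha_{ik}$. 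The last of these is the ``main'' term: since $P^{ijkl}f^\alpha_{lj}f^\alpha_{ik}$ pairs a factor symmetric in $(i,k)$ with $P$, and using $P^{ijkl}=P^{klij}$, one checks it equals $\tfrac12 P^{ijkl}U^{\alpha\beta}(\ldots)$ type expressions; more precisely I expect $2P^{ijkl}f^\alpha_{lj}f^\alpha_{ik}$ to combine with the lower-order pieces to yield $\tfrac12 L_2 + \tfrac12 L_2^\perp$, where $L_2=2P^{ijkl}U^{\alpha\beta}f^\alpha_{ik}f^\beta_{jl}$ by Lemma~\ref{lem-formulas}.

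The key technical device is to handle the derivative $\pt_i P^{ijkl}$. Since $P$ is divergence-free as a $(0,4)$-tensor with respect to the Levi-Civita connection, \eqref{P-divfree} gives $\nabla_i P^{ijkl}=0$, so $\pt_i P^{ijkl}$ is a sum of Christoffel-symbol terms $\Gamma\ast P$. Using $\Gamma_{ij}^k=U^{\alpha\beta}f^\alpha_k f^\beta_{ij}$ from Lemma~\ref{lem-formulas}, each such term carries a factor $f^\alpha_k$ (a first derivative of $f$), which when contracted against the remaining $f^\alpha_k$ already present combines via $\delta_{\alpha\beta}=U^{\gamma\beta}+U^{\alpha\gamma}\langle Df^\alpha,Df^\beta\rangle$ and $g^{ij}=\delta_{ij}-f^\alpha_i f^\beta_j U^{\alpha\beta}$ to rebuild metric factors; similarly the term $2P^{ijkl}f^\alpha_{ilj}f^\alpha_k$ (third derivatives of $f$) should be absorbed. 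I would also need the second Bianchi-type / commutation identities implicit in the divergence-free property, and crucially the Codazzi equation for the graph, which in this flat ambient setting says that the normal-bundle-twisted covariant derivative of the second fundamental form is symmetric; the failure of naive symmetry of $f^\alpha_{ilj}$ after lowering through $U$ is exactly what produces the normal curvature terms $R^\perp_{\alpha\beta}$ assembled into $L_2^\perp$.

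Once the bookkeeping is done, I would identify the collected ``defect'' terms with the explicit expression \eqref{eq-L2} for $L_2^\perp$ — this is where the remark in the introduction (``the crucial step is checking that $\tfrac12 L_2^\perp$ can be represented as normal curvature terms'') applies: one must recognize the antisymmetrized combinations $A^\alpha A^\beta-A^\beta A^\alpha$ and, using \eqref{eq-normal}, rewrite them as $R^\perp_{\alpha\beta}$, and use the commutation formula \eqref{eq-commute} between $Rc$ and $A^\alpha$ to package the Ricci-type contractions that arise from the $R^{jk}g^{il}$-type terms in the definition \eqref{def-P} of $P$. Finally, when $M$ has flat normal bundle, $R^\perp_{\alpha\beta}\equiv 0$, hence $L_2^\perp=0$ and \eqref{key-equ} follows immediately from \eqref{key-equ1}.

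The main obstacle I anticipate is purely computational stamina combined with correctly tracking the interplay of three index types (base indices $i,j,k,l$, normal indices $\alpha,\beta$, and the matrices $U^{\alpha\beta}$, $g^{ij}$): the definition \eqref{def-P} of $P$ has six terms, each of which must be contracted against $\pt_i$ of a product of $f$-derivatives, and the cancellations that leave only $\tfrac12 L_2 + \tfrac12 L_2^\perp$ are delicate. In particular, verifying that \emph{all} the would-be extraneous third-derivative terms $P^{ijkl}f^\alpha_{ilj}f^\alpha_k$ genuinely cancel (using the symmetries of $P$ and symmetry of $f^\alpha$ in its lower indices) — rather than contributing spuriously — is the step where an error is most likely to hide, so I would organize the computation by first treating the pure Riemann part $P_{(0)}^{ijkl}=R^{ijkl}$ of $P$ and then the Ricci/scalar correction separately, matching each against the corresponding pieces of $L_2$ and $L_2^\perp$.
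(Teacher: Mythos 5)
Your overall route is the same as the paper's (product rule, use of the divergence--free property \eqref{P-divfree} to trade $\pt_iP^{ijkl}$ for Christoffel terms, the formulas of Lemma \ref{lem-formulas}, and recognition of commutators of shape operators as normal curvature via \eqref{eq-normal} and \eqref{eq-commute}), but there are two concrete problems. First, your opening step is based on a symmetry $P$ does not have: $P^{ijkl}$ is antisymmetric in $i\leftrightarrow j$ and in $k\leftrightarrow l$, not in $j\leftrightarrow k$, and the two pieces of $\pt_lg_{jk}=f^\alpha_{lj}f^\alpha_k+f^\alpha_jf^\alpha_{lk}$ do \emph{not} contribute equally. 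In fact $P^{ijkl}f^\alpha_jf^\alpha_{lk}=0$, since $f^\alpha_{lk}$ is symmetric in $(k,l)$ while $P^{ijkl}$ is antisymmetric there; hence $P^{ijkl}\pt_lg_{jk}=P^{ijkl}f^\alpha_{lj}f^\alpha_k$ with coefficient $1$, not $2$. Carried through, your factor of $2$ would produce $L_2+L_2^{\bot}$ on the right-hand side instead of $\frac12L_2+\frac12L_2^{\bot}$.

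Second, and more seriously, you locate the origin of $L_2^{\bot}$ in the Codazzi equation, i.e.\ in a supposed failure of symmetry of the third derivatives $f^\alpha_{ilj}$. These are ordinary partial derivatives and are totally symmetric; all third-derivative terms are annihilated outright by the antisymmetries of $P$, and Codazzi plays no role anywhere in the argument. The normal-curvature terms actually arise from the Christoffel corrections in $\pt_iP^{ijkl}\,\pt_lg_{jk}$, specifically from the piece $-P^{ijkl}\bigl(\Gamma_{is}^s f^{\alpha}_{jl}f^{\alpha}_k+\Gamma_{il}^s f^{\alpha}_{js}f^{\alpha}_k\bigr)$, while the other Christoffel piece is what converts $\delta_{\alpha\beta}$ into $U^{\alpha\beta}$ and, together with the second-derivative term $P^{ijkl}f^\alpha_{jl}f^\alpha_{ik}$, yields exactly $\frac12L_2$. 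The substantive content of the proposition --- which your proposal defers as ``bookkeeping'' --- is precisely the verification that this leftover piece, contracted against each of the six terms in \eqref{def-P}, is a sum of terms every one of which carries a factor $R^{\bot}_{\alpha\gamma}$ or $T^{\bot}_{\gamma}$; this is the paper's chain of Lemmas \ref{lem-Riem}--\ref{lem-R}, using \eqref{eq-shape}, \eqref{eq-normal}, \eqref{eq-Ricci} and the commutation formula \eqref{eq-commute}, and it is what gives the explicit expression \eqref{eq-L2} and hence the vanishing of $L_2^{\bot}$ for flat normal bundle. As written, that key conclusion is asserted rather than proved, and the mechanism you propose for it is not the right one.
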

\begin{rem}
The equation \eqref{key-equ} was shown by Ge-Wang-Wu in \cite[lemma 4.3]{GWW} for the hypersurface graph case, i.e., $f:\R^n\rightarrow\R$ and $M=\{(x,f(x))|x\in\R^n\}$ is a hypersurface in $\R^{n+1}$. Equation \eqref{key-equ} was the key ingredient to show the positive mass theorem for $m_2$ in the hypersurface graph case.
\end{rem}
\proof
\begin{align*}
    \pt_i(P^{ijkl}\pt_lg_{jk})=&\pt_iP^{ijkl}\pt_lg_{jk}+P^{ijkl}\pt_i\pt_lg_{jk}.
\end{align*}
By the symmetric property \eqref{P-symmetri} of $P^{ijkl}$, the second term is equal to:
\begin{align}
    P^{ijkl}\pt_i\pt_lg_{jk}=&P^{ijkl}(f^{\alpha}_{ijl}f^{\alpha}_k+f^{\alpha}_{jl}f^{\alpha}_{ik}+f^{\alpha}_{ji}f^{\alpha}_{kl}+f^{\alpha}_{j}f^{\alpha}_{kil})\nonumber\\
    =&P^{ijkl}f^{\alpha}_{jl}f^{\alpha}_{ik}.
\end{align}
By the divergence-free property \eqref{P-divfree} of $P^{ijkl}$ and \eqref{P-symmetri}, the first term:
\begin{align*}
    &\pt_iP^{ijkl}\pt_lg_{jk}\\
    =&(\nabla_iP^{ijkl}-P^{sjkl}\Gamma_{is}^i-P^{iskl}\Gamma_{is}^j-P^{ijsl}\Gamma_{is}^k-P^{ijks}\Gamma_{is}^l)\pt_lg_{jk}\\
    =&-P^{ijkl}(\Gamma_{is}^s\pt_lg_{jk}+\Gamma_{ik}^s\pt_lg_{js}+\Gamma_{il}^s\pt_sg_{jk})\\
    =&-P^{ijkl}\biggl(\Gamma_{is}^s(f^{\alpha}_{jl}f^{\alpha}_k+f^{\alpha}_{j}f^{\alpha}_{kl})+\Gamma_{ik}^s(f^{\alpha}_{jl}f^{\alpha}_s+f^{\alpha}_{j}f^{\alpha}_{sl}) +\Gamma_{il}^s(f^{\alpha}_{js}f^{\alpha}_k+f^{\alpha}_{j}f^{\alpha}_{ks})\biggr)\\
    =&-P^{ijkl}(\Gamma_{ik}^{s}f_{jl}^{\alpha}f_s^{\alpha}+\Gamma_{is}^sf_{jl}^{\alpha}f_k^{\alpha}+\Gamma_{il}^sf_{js}^{\alpha}f_k^{\alpha})\\
    =&I+II
\end{align*}
where
\begin{align*}
    I=&-P^{ijkl}\Gamma_{ik}^{s}f_{jl}^{\alpha}f_s^{\alpha}\\
    II=&-P^{ijkl}(\Gamma_{is}^sf_{jl}^{\alpha}f_k^{\alpha}+\Gamma_{il}^sf_{js}^{\alpha}f_k^{\alpha}).
\end{align*}
We will first calculate $I$:
\begin{align}
    I=&-P^{ijkl}\Gamma_{ik}^{s}f_{jl}^{\alpha}f_s^{\alpha}\nonumber\\
    =&-P^{ijkl}U^{\beta\gamma}f_s^{\beta}f_{ik}^{\gamma}f_{jl}^{\alpha}f_s^{\alpha}\nonumber\\
    =&P^{ijkl}f_{ik}^{\gamma}f_{jl}^{\alpha}U^{\beta\gamma}\langle Df^{\beta},Df^{\alpha}\rangle\nonumber\\
    =&P^{ijkl}f_{ik}^{\gamma}f_{jl}^{\alpha}(U^{\alpha\gamma}-\delta_{\alpha\gamma})\nonumber\\
    =&P^{ijkl}f_{ik}^{\gamma}f_{jl}^{\alpha}U^{\alpha\gamma}-P^{ijkl}f_{ik}^{\alpha}f_{jl}^{\alpha}\nonumber\\
    =&\frac 12L_2-P^{ijkl}f_{ik}^{\alpha}f_{jl}^{\alpha}.
\end{align}
Thus we have
\begin{align}
    \pt_i(P^{ijkl}\pt_lg_{jk})=&\frac 12L_2+\frac 12L_2^{\bot}
\end{align}
where
\begin{align}
    \frac 12L_2^{\bot}=II=&-P^{ijkl}(\Gamma_{is}^sf_{jl}^{\alpha}f_k^{\alpha}+\Gamma_{il}^sf_{js}^{\alpha}f_k^{\alpha})\nonumber\\
    =&P^{ijkl}(U^{\beta\gamma}f^{\beta}_sf^{\gamma}_{js}f_{il}^{\alpha}f_k^{\alpha}-U^{\beta\gamma}f_s^{\beta}f^{\gamma}_{il}f_{js}^{\alpha}f_k^{\alpha})\nonumber\\
    =&P^{ijkl}U^{\beta\gamma}f^{\beta}_sf_k^{\alpha}(f^{\gamma}_{js}f_{il}^{\alpha}-f^{\gamma}_{il}f_{js}^{\alpha}).
\end{align}
For simplicity, we denote
\begin{align}
    T_{ijkl}=&U^{\beta\gamma}f^{\beta}_sf_k^{\alpha}(f^{\gamma}_{js}f_{il}^{\alpha}-f^{\gamma}_{il}f_{js}^{\alpha})\nonumber\\
    =&f^{\alpha}_k\left(\langle A^{\gamma}\pt_j,\nabla f^{\gamma}\rangle\langle A^{\alpha}\pt_i,\pt_l\rangle-\langle A^{\alpha}\pt_j,\nabla f^{\gamma}\rangle\langle A^{\gamma}\pt_i,\pt_l\rangle\right).
\end{align}
Note that in general, $T_{ijkl}$ has no symmetric property. In the following five lemmas, we will calculate $\frac 12L_2^{\bot}=P^{ijkl}T_{ijkl}$ explicitly. Recall that
\begin{align*}
    P^{ijkl}=R^{ijkl}+R^{jk}g^{il}-R^{jl}g^{ik}-R^{ik}g^{jl}+R^{il}g^{jk}+\frac 12R(g^{ik}g^{jl}-g^{il}g^{jk}).
\end{align*}
In the following calculation, we will use \eqref{eq-shape},\eqref{eq-normal},\eqref{eq-Ricci} and \eqref{eq-commute} many times.

\begin{lem}\label{lem-Riem}
We have
\begin{align}
    &R^{ijkl}T_{ijkl}\nonumber\\
    =&U^{\mu\nu}\langle \left(A^{\mu}(A^{\nu}R^{\bot}_{\alpha\gamma}+R^{\bot}_{\nu\gamma}A^{\alpha}-R^{\bot}_{\nu\alpha}A^{\gamma})+Tr(A^{\alpha}A^{\mu}) R^{\bot}_{\gamma\nu}\right)\nabla f^{\alpha},\nabla f^{\gamma}\rangle
\end{align}
\end{lem}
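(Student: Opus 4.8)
The plan is to compute $R^{ijkl}T_{ijkl}$ directly by substituting the expression for $T_{ijkl}$ and using the Gauss equation $R_{ijkl}=U^{\alpha\beta}(f^{\alpha}_{ik}f^{\beta}_{jl}-f^{\alpha}_{il}f^{\beta}_{jk})$ from Lemma \ref{lem-formulas}. Writing everything in terms of the shape operators $A^{\alpha}$ and the gradients $\nabla f^{\alpha}$, the contraction $R^{ijkl}T_{ijkl}$ becomes a sum of scalar products of compositions of shape operators evaluated on the vectors $\nabla f^{\alpha},\nabla f^{\gamma}$, because each free index $i,j,k,l$ in $T_{ijkl}$ is either contracted against a shape operator slot or carries an $f^{\alpha}_k$ factor which, upon contraction with $g^{kl}$-type terms hidden in $R^{ijkl}$, turns into $\nabla f^{\alpha}$. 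So the first step is a bookkeeping reduction: expand $R^{ijkl}$ via Gauss, pair up the four $f$-Hessians of $R$ with the four tensorial factors in $T_{ijkl}$, and rewrite each resulting scalar as $\langle (\text{composition of }A\text{'s})\nabla f^{\alpha},\nabla f^{\gamma}\rangle$ (using the symmetry \eqref{eq-shape} of $A^{\alpha}$ freely to move operators across the inner product).

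Next I would collect the resulting terms. Several of them will be compositions $A^{\mu}A^{\nu}A^{\alpha}A^{\gamma}$, $A^{\mu}A^{\alpha}A^{\nu}A^{\gamma}$, etc., contracted with $U^{\mu\nu}$ and with traces $\mathrm{Tr}(A^{\mu})$ or $\mathrm{Tr}(A^{\alpha}A^{\mu})$ floating out front. The key move — this is the ``crucial step'' flagged in the introduction of the paper — is to recognize that the antisymmetry built into $T_{ijkl}$ (the factor $f^{\gamma}_{js}f^{\alpha}_{il}-f^{\gamma}_{il}f^{\alpha}_{js}$), after contraction with $R^{ijkl}$, forces all the ``symmetric'' pieces to cancel in pairs, leaving only differences of shape-operator compositions. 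Those differences are exactly the normal curvature operators: $A^{\beta}A^{\alpha}-A^{\alpha}A^{\beta}=R^{\bot}_{\alpha\beta}$ by \eqref{eq-normal}. So I would systematically rewrite $A^{\nu}A^{\alpha}=A^{\alpha}A^{\nu}+R^{\bot}_{\alpha\nu}$ and $A^{\mu}A^{\nu}A^{\alpha}=A^{\mu}A^{\alpha}A^{\nu}+A^{\mu}R^{\bot}_{\alpha\nu}$, etc., exactly as in the proof of the commutation lemma \eqref{eq-commute}, until every surviving term contains at least one factor $R^{\bot}_{\cdot\cdot}$; any purely-$A$ term that remains must cancel against its mirror image coming from the other half of the antisymmetrized bracket. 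After this reorganization the terms group into precisely the three families $U^{\mu\nu}A^{\mu}A^{\nu}R^{\bot}_{\alpha\gamma}$, $U^{\mu\nu}A^{\mu}R^{\bot}_{\nu\gamma}A^{\alpha}$ (and $-U^{\mu\nu}A^{\mu}R^{\bot}_{\nu\alpha}A^{\gamma}$), and $U^{\mu\nu}\mathrm{Tr}(A^{\alpha}A^{\mu})R^{\bot}_{\gamma\nu}$, all sandwiched between $\nabla f^{\alpha}$ and $\nabla f^{\gamma}$, which is the claimed formula.

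I expect the main obstacle to be the combinatorics of the cancellation: there are many terms (expanding $R^{ijkl}$ gives two terms, $T_{ijkl}$ has two terms, and each $\Gamma$-type substitution that went into $T_{ijkl}$ splits further via $U^{\alpha\gamma}-\delta_{\alpha\gamma}$ identities), and keeping track of which index is a trace, which is summed against $U^{\mu\nu}$, and which sits in an operator slot, while repeatedly applying the shape-operator symmetry and the $A,R^{\bot}$ commutation rewrites, is error-prone. The conceptual content is light — it is all Gauss equation plus \eqref{eq-normal} and \eqref{eq-commute} — but the care needed to see that exactly the right symmetric terms cancel and exactly the three asserted $R^{\bot}$-families remain is where the work lies. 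A useful organizing principle to control this is to note that $\nabla f^{\alpha}=g^{jk}f^{\alpha}_k\pt_j=U^{\alpha\gamma}f^{\gamma}_i\pt_i$, so any index that ends up as an $f^{\alpha}$-slot on a Hessian is the image of $\nabla f^{\alpha}$ under the corresponding $A$, which lets one read off the operator compositions almost mechanically once the contractions are set up.
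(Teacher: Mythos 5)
Your proposal follows essentially the same route as the paper's proof: substitute the Gauss-equation expression for $R^{ijkl}$ from Lemma \ref{lem-formulas}, rewrite the contraction with $T_{ijkl}$ as inner products of compositions of shape operators acting on $\nabla f^{\alpha},\nabla f^{\gamma}$ (moving operators across the inner product via \eqref{eq-shape}), and turn the antisymmetric combinations into normal curvature terms via \eqref{eq-normal}, which is exactly how the paper arrives at the three families $U^{\mu\nu}A^{\mu}A^{\nu}R^{\bot}_{\alpha\gamma}$, $U^{\mu\nu}A^{\mu}(R^{\bot}_{\nu\gamma}A^{\alpha}-R^{\bot}_{\nu\alpha}A^{\gamma})$ and $U^{\mu\nu}\mathrm{Tr}(A^{\alpha}A^{\mu})R^{\bot}_{\gamma\nu}$. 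The only minor inaccuracies are cosmetic: \eqref{eq-commute} is not needed for this particular lemma (it enters only in the later Ricci-term lemmas), and the leftover purely-$A$ commutator differences do not cancel in pairs but are themselves rewritten as $R^{\bot}$ terms (e.g.\ $A^{\nu}A^{\alpha}A^{\gamma}-A^{\nu}A^{\gamma}A^{\alpha}=A^{\nu}R^{\bot}_{\gamma\alpha}$), which is consistent with your stated mechanism.
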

\begin{proof}
From lemma \ref{lem-formulas}, we get
\begin{align*}
    R^{ijkl}=U^{\mu\nu}(f^{\mu}_{mn}f^{\nu}_{th}-f^{\mu}_{mh}f^{\nu}_{tn})g^{mi}g^{nk}g^{jt}g^{hl}.
\end{align*}
We have
\begin{align}
    &U^{\mu\nu}f^{\mu}_{mn}f^{\nu}_{th}g^{mi}g^{nk}g^{jt}g^{hl}T_{ijkl}\nonumber\\
    =&U^{\mu\nu}g^{nk}g^{hl}U^{\beta\gamma}f^{\beta}_sf_k^{\alpha}\biggl(\langle A^{\gamma}\pt_s,A^{\nu}\pt_h\rangle\langle A^{\alpha}\pt_l,A^{\mu}\pt_n\rangle-\langle A^{\alpha}\pt_s,A^{\nu}\pt_h\rangle\langle A^{\gamma}\pt_l,A^{\mu}\pt_n\rangle\biggr)\nonumber\\
    =&U^{\mu\nu}g^{hl}\biggl(\langle A^{\gamma}\nabla f^{\gamma},A^{\nu}\pt_h\rangle\langle A^{\alpha}\pt_l,A^{\mu}\nabla f^{\alpha}\rangle-\langle A^{\alpha}\nabla f^{\gamma},A^{\nu}\pt_h\rangle\langle A^{\gamma}\pt_l,A^{\mu}\nabla f^{\alpha}\rangle\biggr)\nonumber\\
    =&U^{\mu\nu}\biggl(\langle A^{\alpha}A^{\nu}A^{\gamma}\nabla f^{\gamma},A^{\mu}\nabla f^{\alpha}\rangle-\langle A^{\gamma}A^{\nu}A^{\alpha}\nabla f^{\gamma},A^{\mu}\nabla f^{\alpha}\rangle\biggr)\label{riem-eq1}
\end{align}
Further more, the commutation of shape operator gives the normal curvature terms.
\begin{align*}
    &\langle A^{\alpha}A^{\nu}A^{\gamma}\nabla f^{\gamma},A^{\mu}\nabla f^{\alpha}\rangle-\langle A^{\gamma}A^{\nu}A^{\alpha}\nabla f^{\gamma},A^{\mu}\nabla f^{\alpha}\rangle\\
    =&\langle (A^{\nu}A^{\alpha}+R^{\bot}_{\nu\alpha})A^{\gamma}\nabla f^{\gamma},A^{\mu}\nabla f^{\alpha}\rangle\\
    &-\langle (A^{\nu}A^{\gamma}+R^{\bot}_{\nu\gamma})A^{\alpha}\nabla f^{\gamma},A^{\mu}\nabla f^{\alpha}\rangle\\
    =&\langle (A^{\nu}R^{\bot}_{\gamma\alpha}+R^{\bot}_{\nu\alpha}A^{\gamma}-R^{\bot}_{\nu\gamma}A^{\alpha})\nabla f^{\gamma},A^{\mu}\nabla f^{\alpha}\rangle
\end{align*}
We conclude that the quantity \eqref{riem-eq1} is equal to
\begin{align*}
    \eqref{riem-eq1}=&\langle U^{\mu\nu}A^{\mu}\left(A^{\nu}R^{\bot}_{\gamma\alpha}+R^{\bot}_{\nu\alpha}A^{\gamma}-R^{\bot}_{\nu\gamma}A^{\alpha}\right)\nabla f^{\gamma},\nabla f^{\alpha}\rangle.
\end{align*}
Finally
\begin{align*}
    &-U^{\mu\nu}f^{\mu}_{mh}f^{\nu}_{tn}g^{mi}g^{nk}g^{jt}g^{hl}T_{ijkl}\\
    =&U^{\mu\nu}g^{nk}g^{hl}U^{\beta\gamma}f^{\beta}_sf_k^{\alpha}\biggl(\langle A^{\gamma}\pt_l,A^{\mu}\pt_h\rangle\langle A^{\alpha}\pt_s,A^{\nu}\pt_n\rangle-\langle A^{\alpha}\pt_l,A^{\mu}\pt_h\rangle\langle A^{\gamma}\pt_s,A^{\nu}\pt_n\rangle\biggr)\\
    =&U^{\mu\nu}g^{hl}\biggl(\langle A^{\gamma}\pt_l,A^{\mu}\pt_h\rangle\langle A^{\alpha}\nabla f^{\gamma},A^{\nu}\nabla f^{\alpha}\rangle-\langle A^{\alpha}\pt_l,A^{\mu}\pt_h\rangle\langle A^{\gamma}\nabla f^{\gamma},A^{\nu}\nabla f^{\alpha}\rangle\biggr)\\
    =&U^{\mu\nu}g^{hl}\langle A^{\alpha}\pt_l,A^{\mu}\pt_h\rangle\left(\langle A^{\gamma}\nabla f^{\alpha},A^{\nu}\nabla f^{\gamma}\rangle-\langle A^{\gamma}\nabla f^{\gamma},A^{\nu}\nabla f^{\alpha}\rangle\right)\\
    =&U^{\mu\nu}Tr(A^{\alpha}A^{\mu})\langle R^{\bot}_{\gamma\nu}(\nabla f^{\alpha}),\nabla f^{\gamma}\rangle.
\end{align*}
\end{proof}

\begin{lem}\label{lem-Ric4}
We have
\begin{align}
    R^{jk}g^{il}T_{ijkl}=&\langle (Tr A^{\alpha})T^{\bot}_{\gamma}\nabla f^{\alpha}, \nabla f^{\gamma}\rangle.
\end{align}
\end{lem}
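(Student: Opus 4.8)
The plan is to expand the Ricci curvature $R^{jk}$ using the Gauss equation (Lemma \ref{lem-formulas}), substitute into the contraction $R^{jk}g^{il}T_{ijkl}$, and then repeatedly use the fact that contracting $f_s^\beta$ against a shape operator produces a gradient vector, i.e.\ $g^{sr}f_s^\beta A^\gamma \pt_r$-type sums collapse to $A^\gamma \nabla f^\beta$, together with the identity $U^{\beta\gamma}\langle Df^\beta, Df^\alpha\rangle = \delta_{\alpha\gamma} - U^{\alpha\gamma}$ that was already used in the computation of the term $I$. After those reductions the expression should organize itself into terms of the form $\langle A^\mu A^\nu \cdot, \cdot\rangle$ and $\langle (\mathrm{Tr}\,A^\mu) A^\nu \cdot, \cdot\rangle$ evaluated on the gradient vectors $\nabla f^\alpha, \nabla f^\gamma$, and the commutators of the shape operators will turn the antisymmetric combinations into normal curvature operators $R^\bot_{\alpha\beta}$.

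First I would write, from $R_{jk} = g^{il}R_{ijkl}$ and Lemma \ref{lem-formulas}, the Ricci operator in the familiar hypersurface-type form $Rc = U^{\mu\nu}\big((\mathrm{Tr}\,A^\mu)A^\nu - A^\mu A^\nu\big)$, which is exactly the expression used in the proof of \eqref{eq-commute}. Then $R^{jk}g^{il}T_{ijkl}$ becomes, after raising indices with the metric, a sum in which the index $k$ (carrying the factor $f_k^\alpha$ in $T_{ijkl}$) gets contracted against $Rc$, and the index $i$ (with the free slot $\pt_i$ and the derivative $f^\gamma_{il}$ or $f^\alpha_{il}$) gets contracted against $g^{il}$, i.e.\ $l$ is identified with $i$. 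The two shape-operator factors $\langle A^\gamma \pt_j, \nabla f^\gamma\rangle$ and $\langle A^\alpha \pt_i, \pt_l\rangle$ in the definition of $T_{ijkl}$ (and the swapped version) then combine with the $Rc$ coming from $R^{jk}$: the slot $\pt_j$ gets hit by $Rc$, and contracting $\pt_i$ with $\pt_l = \pt_i$ inside $\langle A^\alpha \pt_i,\pt_i\rangle$ produces $\mathrm{Tr}\,A^\alpha$ while the remaining piece $f_k^\alpha$ folds into $\nabla f^\alpha$ via $g^{kr}$ hidden in the raised $R^{jk}$. Tracking this carefully, the first summand of $T_{ijkl}$ contributes $\langle A^\gamma \, Rc \, \nabla f^\gamma, (\mathrm{Tr}\,A^\alpha)\nabla f^\alpha\rangle$ and the second contributes $-\langle A^\alpha \, Rc \, \nabla f^\gamma, \ldots\rangle$-type terms; collecting and relabeling, and using the commutator \eqref{eq-commute} to replace $A^\gamma Rc - Rc A^\gamma$ by $T^\bot_\gamma$, should leave precisely $\langle (\mathrm{Tr}\,A^\alpha) T^\bot_\gamma \nabla f^\alpha, \nabla f^\gamma\rangle$.

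The main obstacle I anticipate is bookkeeping rather than conceptual: $T_{ijkl}$ has no symmetry, $P^{ijkl}$ couples four indices in a specific antisymmetric pattern, and one must be scrupulous about which of the two terms in $T_{ijkl}=U^{\beta\gamma}f^\beta_s f^\alpha_k(f^\gamma_{js}f^\alpha_{il}-f^\gamma_{il}f^\alpha_{js})$ is being contracted and against which slot of $R^{jk}g^{il}$. In particular one has to verify that the ``non-commutator'' pieces — the terms in which $A^\gamma$ and $Rc$ appear in the already-commuted order — cancel in pairs, so that only the commutator $T^\bot_\gamma$ survives; this cancellation is forced by the antisymmetry built into $T_{ijkl}$ under the formal exchange that swaps $\alpha\leftrightarrow\gamma$ together with a sign, combined with the symmetry of $Rc$ and of each $A^\beta$ from \eqref{eq-shape} and \eqref{eq-Ricci}. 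I would check a low-codimension case ($m=1$, where all $R^\bot$ and $T^\bot$ vanish and both sides are $0$) as a sanity test before committing to the general index chase.
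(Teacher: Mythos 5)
Your proposal is correct and follows essentially the same route as the paper's proof: contracting $g^{il}$ against $\langle A^{\alpha}\pt_i,\pt_l\rangle$ produces $Tr\,A^{\alpha}$, contracting $R^{jk}f^{\alpha}_k$ produces $Rc(\nabla f^{\alpha})$, and after relabeling the dummy indices $\alpha\leftrightarrow\gamma$ in the second summand, the symmetry of $Rc$ and $A^{\gamma}$ together with \eqref{eq-commute} makes the ``already-commuted'' pieces cancel, leaving exactly $\langle (Tr\,A^{\alpha})T^{\bot}_{\gamma}\nabla f^{\alpha},\nabla f^{\gamma}\rangle$. Two minor points that do not affect the argument: the explicit Gauss-equation expansion of $Rc$ is unnecessary (the paper keeps $Rc$ abstract and only invokes the commutation formula), and your displayed intermediate contribution of the first summand should be $Tr\,A^{\alpha}\,\langle A^{\gamma}Rc\,\nabla f^{\alpha},\nabla f^{\gamma}\rangle$ (with $Rc$ acting on $\nabla f^{\alpha}$), the effective antisymmetry being created only after contraction and relabeling, since $T_{ijkl}$ itself has no such symmetry.
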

\begin{proof}
Recall
\begin{align*}
   &R^{jk}g^{il}T_{ijkl}\\
   =&R^{jk}f^{\alpha}_k\left(\langle A^{\gamma}\nabla f^{\gamma},\pt_j\rangle Tr A^{\alpha}-\langle A^{\alpha}\nabla f^{\gamma},\pt_j\rangle Tr A^{\gamma}\right)\\
   =&Tr A^{\alpha}\left(\langle A^{\gamma}\nabla f^{\gamma},Rc(\nabla f^{\alpha})\rangle-\langle A^{\gamma}\nabla f^{\alpha},Rc(\nabla f^{\gamma})\rangle\right)\\
   =&Tr A^{\alpha}\left(\langle \nabla f^{\gamma},A^{\gamma}Rc(\nabla f^{\alpha})\rangle-\langle \nabla f^{\alpha},A^{\gamma}Rc(\nabla f^{\gamma})\rangle\right)\\
   =&Tr A^{\alpha}\left(\langle \nabla f^{\gamma},Rc A^{\gamma}(\nabla f^{\alpha})+T^{\bot}_{\gamma}\nabla f^{\alpha}\rangle-\langle \nabla f^{\alpha},A^{\gamma}Rc(\nabla f^{\gamma})\rangle\right)\\
   =&Tr A^{\alpha}\langle \nabla f^{\gamma},T^{\bot}_{\gamma}\nabla f^{\alpha}\rangle.
\end{align*}
\end{proof}

\begin{lem}\label{lem-Ric1}
We have
\begin{align}
    -R^{jl}g^{ik}T_{ijkl}=&-\langle \left(Rc R^{\bot}_{\alpha\gamma}+T^{\bot}_{\gamma}A^{\alpha}+A^{\alpha}T^{\bot}_{\gamma}\right)\nabla f^{\alpha},\nabla f^{\gamma}\rangle.
\end{align}
\end{lem}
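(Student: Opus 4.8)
The plan is to rewrite $-R^{jl}g^{ik}T_{ijkl}$ so that only the Ricci operator $Rc$, the shape operators $A^{\alpha}$, the normal curvature operators $R^{\bot}_{\alpha\gamma}$ and the gradient fields $\nabla f^{\alpha}$ appear, and then to feed the result into the commutation identity \eqref{eq-commute}. This follows the same template as the proofs of Lemmas \ref{lem-Riem} and \ref{lem-Ric4}.

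First I would contract the two auxiliary indices $i,k$. Since $g^{ik}f^{\alpha}_{k}\pt_{i}=\nabla f^{\alpha}$ and the $A^{\alpha}$ are self-adjoint, contracting $g^{ik}$ against $T_{ijkl}$ turns the factor $f^{\alpha}_{k}$ into $\nabla f^{\alpha}$ inside the appropriate shape operator, giving
\begin{align*}
g^{ik}T_{ijkl}=\la A^{\gamma}\pt_{j},\nabla f^{\gamma}\ra\la A^{\alpha}\nabla f^{\alpha},\pt_{l}\ra-\la A^{\alpha}\pt_{j},\nabla f^{\gamma}\ra\la A^{\gamma}\nabla f^{\alpha},\pt_{l}\ra.
\end{align*}
Contracting with $-R^{jl}$, and using $R^{jl}\la X,\pt_{j}\ra\la Y,\pt_{l}\ra=\la Rc(X),Y\ra$ together with $\la A^{\alpha}\pt_{j},V\ra=\la\pt_{j},A^{\alpha}V\ra$, this becomes
\begin{align*}
-R^{jl}g^{ik}T_{ijkl}=-\la Rc(A^{\gamma}\nabla f^{\gamma}),A^{\alpha}\nabla f^{\alpha}\ra+\la Rc(A^{\alpha}\nabla f^{\gamma}),A^{\gamma}\nabla f^{\alpha}\ra.
\end{align*}
Moving the outer shape operators across the inner products (using self-adjointness of $A^{\alpha}$ and $Rc$) and relabelling the summed normal indices $\alpha\leftrightarrow\gamma$, the two terms combine into the single bilinear form $\la(A^{\alpha}Rc\,A^{\gamma}-A^{\gamma}Rc\,A^{\alpha})\nabla f^{\alpha},\nabla f^{\gamma}\ra$.

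Next I would apply \eqref{eq-commute} in the form $A^{\alpha}Rc=Rc\,A^{\alpha}+T^{\bot}_{\alpha}$ to the leading factor of each product, and use $A^{\alpha}A^{\gamma}-A^{\gamma}A^{\alpha}=-R^{\bot}_{\alpha\gamma}$ from \eqref{eq-normal}; the $Rc$-terms then collapse and one is left with
\begin{align*}
-R^{jl}g^{ik}T_{ijkl}=\la\bigl(-Rc\,R^{\bot}_{\alpha\gamma}+T^{\bot}_{\alpha}A^{\gamma}-T^{\bot}_{\gamma}A^{\alpha}\bigr)\nabla f^{\alpha},\nabla f^{\gamma}\ra.
\end{align*}

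The only delicate point is the last step: showing that $\la T^{\bot}_{\alpha}A^{\gamma}\nabla f^{\alpha},\nabla f^{\gamma}\ra=-\la A^{\alpha}T^{\bot}_{\gamma}\nabla f^{\alpha},\nabla f^{\gamma}\ra$, which folds the three terms above into the asserted combination $-(Rc\,R^{\bot}_{\alpha\gamma}+T^{\bot}_{\gamma}A^{\alpha}+A^{\alpha}T^{\bot}_{\gamma})$. For this I would first note that each $R^{\bot}_{\mu\nu}$ is anti-self-adjoint, which is immediate from $\la R^{\bot}_{\mu\nu}X,Y\ra=\la A^{\mu}X,A^{\nu}Y\ra-\la A^{\nu}X,A^{\mu}Y\ra$; then, since $U^{\mu\nu}$ is symmetric, taking adjoints in the three summands defining $T^{\bot}_{\alpha}$ merely permutes them up to an overall minus sign, so $T^{\bot}_{\alpha}$ is itself anti-self-adjoint. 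Moving $A^{\gamma}$ across the inner product and relabelling $\alpha\leftrightarrow\gamma$ then yields the required identity. I expect this verification of the anti-self-adjointness of $T^{\bot}_{\alpha}$ — and, more generally, the careful bookkeeping of signs and dummy normal indices — to be the main source of friction; everything else is routine index manipulation of exactly the kind already performed in the preceding two lemmas.
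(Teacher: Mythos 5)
Your proposal is correct and follows essentially the same route as the paper: contract $g^{ik}$ and $R^{jl}$ to rewrite the quantity through $Rc$, the shape operators and $\nabla f^{\alpha}$, then invoke \eqref{eq-commute} and \eqref{eq-normal}. The only deviation is at the end, where you need the anti-self-adjointness of $T^{\bot}_{\alpha}$ to trade $T^{\bot}_{\alpha}A^{\gamma}$ for $-A^{\alpha}T^{\bot}_{\gamma}$ inside the bilinear form; this is valid (indeed immediate, since $T^{\bot}_{\alpha}=A^{\alpha}Rc-Rc\,A^{\alpha}$ is a commutator of self-adjoint operators), whereas the paper avoids this step by applying \eqref{eq-commute} asymmetrically to its two terms so the stated combination appears directly.
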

\begin{proof}
\begin{align*}
   &R^{jl}g^{ik}T_{ijkl}\\
   =&R^{jl}\left(\langle A^{\gamma}\nabla f^{\gamma},\pt_j\rangle\langle A^{\alpha}\nabla f^{\alpha},\pt_l\rangle-\langle A^{\alpha}\nabla f^{\gamma},\pt_j\rangle\langle A^{\gamma}\nabla f^{\alpha},\pt_l\rangle\right)\\
   =&\langle A^{\alpha}\nabla f^{\alpha},RcA^{\gamma}(\nabla f^{\gamma})\rangle-\langle A^{\gamma}\nabla f^{\alpha},RcA^{\alpha}(\nabla f^{\gamma})\rangle\\
   =&\langle \nabla f^{\alpha},A^{\alpha}RcA^{\gamma}(\nabla f^{\gamma})\rangle-\langle Rc A^{\gamma}\nabla f^{\alpha},A^{\alpha}\nabla f^{\gamma}\rangle\\
   =&\langle \nabla f^{\alpha},(Rc A^{\alpha}+T^{\bot}_{\alpha})A^{\gamma}(\nabla f^{\gamma})\rangle\\
   &-\langle  (A^{\gamma}Rc-T^{\bot}_{\gamma}) \nabla f^{\alpha},A^{\alpha}\nabla f^{\gamma}\rangle\\
   =&\langle Rc \nabla f^{\alpha},R^{\bot}_{\gamma\alpha}\nabla f^{\gamma}\rangle+\langle \nabla f^{\alpha},T^{\bot}_{\alpha}A^{\gamma}(\nabla f^{\gamma})\rangle+\langle T^{\bot}_{\gamma} \nabla f^{\alpha},A^{\alpha}\nabla f^{\gamma}\rangle
\end{align*}

\end{proof}

\begin{lem}\label{lem-Ric2}
We have
\begin{align}
    -R^{ik}g^{jl}T_{ijkl}=&-\langle Rc R^{\bot}_{\alpha\gamma}\nabla f^{\alpha},\nabla f^{\gamma}\rangle.
\end{align}
\end{lem}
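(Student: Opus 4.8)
The plan is to follow the same template as the proofs of Lemmas \ref{lem-Riem}--\ref{lem-Ric1}: I contract the two index pairs of $T_{ijkl}$ separately, first against the inverse metric $g^{jl}$ and then against the raised Ricci tensor $R^{ik}$, converting the coordinate expression into an invariant identity in the shape operators $A^{\alpha}$, the Ricci operator $Rc$, and the normal curvature operators $R^{\bot}_{\alpha\gamma}$. No new ingredients beyond \eqref{eq-shape}, \eqref{eq-normal} and \eqref{eq-Ricci} should be required.

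First I perform the $g^{jl}$-contraction. In the expression for $T_{ijkl}$ the index $j$ occurs only inside $\langle A^{\gamma}\pt_j,\nabla f^{\gamma}\rangle$ and $\langle A^{\alpha}\pt_j,\nabla f^{\gamma}\rangle$, while $l$ occurs only inside $\langle A^{\alpha}\pt_i,\pt_l\rangle$ and $\langle A^{\gamma}\pt_i,\pt_l\rangle$. Moving the shape operator off of $\pt_j$ by \eqref{eq-shape} and using the elementary identity $g^{jl}\langle\pt_j,X\rangle\langle Y,\pt_l\rangle=\langle X,Y\rangle$ gives
\begin{align*}
g^{jl}T_{ijkl}=f^{\alpha}_k\bigl(\langle A^{\gamma}\nabla f^{\gamma},A^{\alpha}\pt_i\rangle-\langle A^{\alpha}\nabla f^{\gamma},A^{\gamma}\pt_i\rangle\bigr).
\end{align*}
Next I contract with $R^{ik}$. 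Since $f^{\alpha}_k=(\nabla f^{\alpha})_k$ are the covariant components of $\nabla f^{\alpha}$, one has $R^{ik}f^{\alpha}_k=(Rc\,\nabla f^{\alpha})^i$; moving $A^{\alpha}$ (resp. $A^{\gamma}$) back across the inner product by \eqref{eq-shape} and pairing with this vector yields
\begin{align*}
R^{ik}g^{jl}T_{ijkl}=\bigl\langle Rc\,\nabla f^{\alpha},(A^{\alpha}A^{\gamma}-A^{\gamma}A^{\alpha})\nabla f^{\gamma}\bigr\rangle=\bigl\langle Rc\,\nabla f^{\alpha},R^{\bot}_{\gamma\alpha}\nabla f^{\gamma}\bigr\rangle,
\end{align*}
the last equality being the definition \eqref{eq-normal}.

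To finish, I use the symmetry \eqref{eq-Ricci} of $Rc$ to rewrite the right-hand side as $\langle Rc\,R^{\bot}_{\gamma\alpha}\nabla f^{\gamma},\nabla f^{\alpha}\rangle$ and then relabel the summation indices $\alpha\leftrightarrow\gamma$, which turns it into $\langle Rc\,R^{\bot}_{\alpha\gamma}\nabla f^{\alpha},\nabla f^{\gamma}\rangle$; negating gives the asserted identity. I do not expect any genuine obstacle here: in particular, in contrast with Lemmas \ref{lem-Ric4} and \ref{lem-Ric1}, the commutation formula \eqref{eq-commute} is not needed, since every term stays expressed directly through $Rc$, $A^{\alpha}$ and $R^{\bot}$. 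The only point that needs care is the index bookkeeping --- keeping straight which of the two copies of $\pt_i$ (the one inside $A^{\alpha}$ or the one inside $A^{\gamma}$) survives the $g^{jl}$-contraction, and keeping the order of the subscripts on $R^{\bot}_{\gamma\alpha}$ correct so that the final relabeling produces exactly $R^{\bot}_{\alpha\gamma}$ and not its negative.
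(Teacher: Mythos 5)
Your proof is correct and follows essentially the same route as the paper's: contract $T_{ijkl}$ with $g^{jl}$ using the symmetry \eqref{eq-shape} of the shape operators, recognize the commutator $A^{\alpha}A^{\gamma}-A^{\gamma}A^{\alpha}=R^{\bot}_{\gamma\alpha}$ via \eqref{eq-normal}, contract with $R^{ik}f^{\alpha}_k$ to bring in $Rc(\nabla f^{\alpha})$, and finish with the symmetry of $Rc$ plus the relabeling $\alpha\leftrightarrow\gamma$. Your index bookkeeping, including the observation that \eqref{eq-commute} is not needed here, matches the paper exactly.
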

\begin{proof}
\begin{align*}
    &R^{ik}g^{jl}T_{ijkl}\\
    =&R^{ik}f^{\alpha}_k\left(\langle A^{\gamma}\nabla f^{\gamma},A^{\alpha}\pt_i\rangle-\langle A^{\alpha}\nabla f^{\gamma},A^{\gamma}\pt_i\rangle\right)\\
    =&R^{ik}f^{\alpha}_k\langle (A^{\alpha}A^{\gamma}-A^{\gamma}A^{\alpha})\nabla f^{\gamma},\pt_i\rangle\\
    =&\langle R^{\bot}_{\gamma\alpha}\nabla f^{\gamma},Rc(\nabla f^{\alpha})\rangle\\
    =&\langle Rc R^{\bot}_{\alpha\gamma}\nabla f^{\alpha},\nabla f^{\gamma}\rangle.
\end{align*}
\end{proof}

\begin{lem}\label{lem-R}
We have
\begin{align}
    &\left(R^{il}g^{jk}+\frac 12 R(g^{ik}g^{jl}-g^{il}g^{jk})\right)T_{ijkl}\nonumber\\
    =&\frac 12 R\langle R^{\bot}_{\alpha\gamma}(\nabla f^{\alpha}),\nabla f^{\gamma}\rangle.
\end{align}
\end{lem}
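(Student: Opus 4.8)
The plan is to expand the left-hand side as the sum of the three contractions $R^{il}g^{jk}T_{ijkl}$, $\frac12 R\,g^{ik}g^{jl}T_{ijkl}$ and $-\frac12 R\,g^{il}g^{jk}T_{ijkl}$, and to show that the first and the last vanish identically, while the middle one equals $R\,\la R^{\bot}_{\alpha\gamma}(\nabla f^{\alpha}),\nabla f^{\gamma}\ra$. Throughout I would use only two ingredients: the identity $g^{jk}f^{\alpha}_k\pt_j=\nabla f^{\alpha}$ (so that contracting $g^{jk}$ with the pair consisting of $f^{\alpha}_k$ and a factor $\pt_j$ occurring inside an inner product replaces that $\pt_j$ by $\nabla f^{\alpha}$), and the symmetry \eqref{eq-shape} of the shape operators. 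It is convenient to abbreviate $W:=A^{\gamma}\nabla f^{\gamma}$ (summed over $\gamma$), so that by \eqref{eq-shape} one has $\la W,X\ra=\la A^{\gamma}X,\nabla f^{\gamma}\ra$ for every tangent vector $X$.

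First I would check that $g^{jk}T_{ijkl}=0$. In the expression for $T_{ijkl}$ the index $j$ occurs only in the inner products $\la A^{\gamma}\pt_j,\nabla f^{\gamma}\ra$ and $\la A^{\alpha}\pt_j,\nabla f^{\gamma}\ra$, while $k$ occurs only in $f^{\alpha}_k$; contracting these with $g^{jk}$ and using \eqref{eq-shape}, the first term of $T_{ijkl}$ becomes $\la \nabla f^{\alpha},W\ra\,\la A^{\alpha}\pt_i,\pt_l\ra$ and the second becomes $-\la W,\nabla f^{\gamma}\ra\,\la A^{\gamma}\pt_i,\pt_l\ra$, and these cancel after renaming the summation index. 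Hence $g^{jk}T_{ijkl}=0$, which at once gives $R^{il}g^{jk}T_{ijkl}=0$ and $g^{il}g^{jk}T_{ijkl}=g^{il}\bigl(g^{jk}T_{ijkl}\bigr)=0$, disposing of two of the three contractions.

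It then remains to compute $g^{ik}g^{jl}T_{ijkl}$. Contracting $g^{ik}$ first (which now pairs the $i$ in $\la A^{\alpha}\pt_i,\pt_l\ra$, resp.\ $\la A^{\gamma}\pt_i,\pt_l\ra$, with the $k$ in $f^{\alpha}_k$) and then $g^{jl}$, and again using \eqref{eq-shape}, one is led to
\[
g^{ik}g^{jl}T_{ijkl}=\la W,W\ra-\la A^{\alpha}\nabla f^{\gamma},A^{\gamma}\nabla f^{\alpha}\ra .
\]
Using \eqref{eq-shape} once more, $\la W,W\ra=\la A^{\alpha}\nabla f^{\alpha},A^{\gamma}\nabla f^{\gamma}\ra=\la A^{\gamma}A^{\alpha}\nabla f^{\alpha},\nabla f^{\gamma}\ra$, while $\la A^{\alpha}\nabla f^{\gamma},A^{\gamma}\nabla f^{\alpha}\ra=\la A^{\gamma}A^{\alpha}\nabla f^{\gamma},\nabla f^{\alpha}\ra=\la A^{\alpha}A^{\gamma}\nabla f^{\alpha},\nabla f^{\gamma}\ra$ (the last step by relabelling $\alpha\leftrightarrow\gamma$), so the difference equals $\la(A^{\gamma}A^{\alpha}-A^{\alpha}A^{\gamma})\nabla f^{\alpha},\nabla f^{\gamma}\ra=\la R^{\bot}_{\alpha\gamma}(\nabla f^{\alpha}),\nabla f^{\gamma}\ra$ by \eqref{eq-normal}. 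Collecting the three contractions gives
\[
\Bigl(R^{il}g^{jk}+\tfrac12 R(g^{ik}g^{jl}-g^{il}g^{jk})\Bigr)T_{ijkl}=\tfrac12 R\,\la R^{\bot}_{\alpha\gamma}(\nabla f^{\alpha}),\nabla f^{\gamma}\ra ,
\]
which is the assertion.

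I expect no real obstacle here: this is the shortest of the five lemmas, and the only point requiring attention is the bookkeeping of the summation over the normal indices $\alpha,\gamma$ together with the observation that, in contrast to $g^{il}T_{ijkl}$, $g^{ik}T_{ijkl}$ and $g^{jl}T_{ijkl}$ (which appear non-trivially in Lemmas \ref{lem-Ric4}--\ref{lem-Ric2}), the contraction $g^{jk}T_{ijkl}$ vanishes, so that only the $R\,g^{ik}g^{jl}$-term of this combination contributes.
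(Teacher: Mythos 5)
Your proposal is correct and follows essentially the same route as the paper: both compute $g^{jk}T_{ijkl}=0$ (which kills the $R^{il}g^{jk}$ and $-\tfrac12 Rg^{il}g^{jk}$ contractions) and identify $g^{ik}g^{jl}T_{ijkl}=\langle A^{\alpha}\nabla f^{\alpha},A^{\gamma}\nabla f^{\gamma}\rangle-\langle A^{\gamma}\nabla f^{\alpha},A^{\alpha}\nabla f^{\gamma}\rangle=\langle R^{\bot}_{\alpha\gamma}(\nabla f^{\alpha}),\nabla f^{\gamma}\rangle$. The only cosmetic difference is that you pass through the commutator form \eqref{eq-normal} (after relabelling indices) where the paper invokes the bilinear expression $\langle R^{\bot}_{\alpha\gamma}X,Y\rangle=\langle A^{\alpha}X,A^{\gamma}Y\rangle-\langle A^{\gamma}X,A^{\alpha}Y\rangle$ directly.
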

\begin{proof}[Proof of lemma \ref{lem-R}]
We calculate as following:
\begin{align*}
    &g^{jk}T_{ijkl}\\
     =&\langle A^{\alpha}\pt_i,\pt_l\rangle \langle A^{\gamma}(\nabla f^{\alpha}),\nabla f^{\gamma}\rangle-\langle A^{\gamma}\pt_i,\pt_l\rangle \langle A^{\alpha}(\nabla f^{\alpha}),\nabla f^{\gamma}\rangle\\
    =&\langle A^{\gamma}\pt_i,\pt_l\rangle \left(\langle A^{\alpha}(\nabla f^{\alpha}),\nabla f^{\gamma}\rangle-\langle A^{\alpha}(\nabla f^{\gamma}),\nabla f^{\alpha}\rangle\right)\\
    =&0,
\end{align*}
and
\begin{align*}
    &g^{ik}g^{jl}T_{ijkl}\\
    =&g^{jl}\left(\langle A^{\gamma}\pt_j,\nabla f^{\gamma}\rangle\langle A^{\alpha}(\nabla f^{\alpha}),\pt_l\rangle-\langle A^{\alpha}\pt_j,\nabla f^{\gamma}\rangle\langle A^{\gamma}(\nabla f^{\alpha}),\pt_l\rangle\right)\\
    =&\langle A^{\alpha}(\nabla f^{\alpha}),A^{\gamma}(\nabla f^{\gamma})\rangle-\langle A^{\gamma}(\nabla f^{\alpha}),A^{\alpha}(\nabla f^{\gamma})\rangle \\
    =&\langle R^{\bot}_{\alpha\gamma}(\nabla f^{\alpha}),\nabla f^{\gamma}\rangle.
\end{align*}
Then the lemma follows easily.
\end{proof}

Finally, from lemma \ref{lem-Riem} to lemma \ref{lem-R}, we get that
\begin{align}
    \frac 12L_2^{\bot}=&U^{\mu\nu}\langle \left(A^{\mu}(A^{\nu}R^{\bot}_{\alpha\gamma}+R^{\bot}_{\nu\gamma}A^{\alpha}-R^{\bot}_{\nu\alpha}A^{\gamma})+Tr(A^{\alpha}A^{\mu}) R^{\bot}_{\gamma\nu}\right)\nabla f^{\alpha},\nabla f^{\gamma}\rangle\nonumber\\
    &-\langle \left(2Rc R^{\bot}_{\alpha\gamma}+T^{\bot}_{\gamma}A^{\alpha}+A^{\alpha}T^{\bot}_{\gamma}\right)\nabla f^{\alpha},\nabla f^{\gamma}\rangle\nonumber\\
    &+\langle (Tr A^{\alpha})T^{\bot}_{\gamma}\nabla f^{\alpha}, \nabla f^{\gamma}\rangle+\frac 12 R\langle R^{\bot}_{\alpha\gamma}(\nabla f^{\alpha}),\nabla f^{\gamma}\rangle.\label{eq-L2}
\end{align}
We complete the proof of Proposition \ref{prop-mass}.
\endproof

\begin{rem}
From the above proof, we can also derive that
\begin{align}
    \pt_i(P_1^{ijkl}\pt_lg_{jk})=\frac 12R+\frac 12R^{\bot},
\end{align}
where $R^{\bot}=\langle R^{\bot}_{\alpha\gamma}(\nabla f^{\alpha}),\nabla f^{\gamma}\rangle$.
\end{rem}

\begin{proof}[Proof of Theorem \ref{thm2}]
From proposition \ref{prop-mass} and the divergence theorem, we have
\begin{align*}
    m_2=&\lim_{r\rightarrow\infty}c_2(n)\int_{S_r}P^{ijkl}\pt_lg_{jk}\nu_idS_r\\
    =&c_2(n)\int_{\R^n}\pt_i(P^{ijkl}\pt_lg_{jk})d\mu_{\R^n}\\
    =&\frac {c_2(n)}2\int_{\R^n}(L_2+L_2^{\bot})d\mu_{\R^n}\\
    =&\frac {c_2(n)}2\int_{\R^n}(L_2+L_2^{\bot})\frac{1}{\sqrt{G}}d\mu_M.
\end{align*}
The last equality is due to the fact that
\begin{align*}
    d\mu_{\R^n}=\frac{1}{\sqrt{G}}d\mu_M,
\end{align*}
where $G$ is the determinant of the metric $(g_{ij})$.
\end{proof}

\section{The proof of Theorem \ref{thm3}}
The proof of Theorem \ref{thm3} is similar as in \cite{GWW} with a slight modification. For convenience of readers, we give a complete proof. By the equation \eqref{key-equ1}, integrating by parts gives that
\begin{align*}
m_2&=\f{1}{2}c_2(n)\int_M (L_2+L_2^{\perp})\f{1}{\sqrt{G}}d\mu_M-c_2(n)\int_\Sigma P^{ijkl}\pt_l g_{jk}\nu_i d\mu_{\Sigma}\\
   &=\f{1}{2}c_2(n)\int_M (L_2+L_2^{\perp})\f{1}{\sqrt{G}}d\mu_M-c_2(n)\int_\Sigma P^{ijkl}f^\alpha_{lj}f^\alpha_k \nu_i d\mu_{\Sigma},
\end{align*}
where $\nu$ is the inward normal of $\Sigma$.

By assumption each connected component of $\Sigma$ is a level set of $f^\alpha$, we know that in $\Sigma$
\begin{equation*}
Df^\alpha=\la Df^\alpha,\nu\ra \nu.
\end{equation*}

Thus if we rotate the coordinates such that $\nu=-e_1$ at a given point in $\Sigma$, then at that point there hold
\begin{equation}
f^\alpha_a=0 \textrm{ and } f^\alpha_{ab}=A_{ab}|Df^\alpha|=A_{ab}|f^\alpha_1|,
\end{equation}
where $A_{ab}$ is the second fundamental form of the isometric embedding $(\Sigma,h)$ into the Euclidean space $\R^n$ with $h$ the induced metric on $\Sigma$. In this section, we use the convention that $\{a,b,c,\cdots\}$ stand for $\{2,3,\cdots,n\}$ and $\{j,k,l,\cdots\}$ stand for $\{1,2,\cdots,n\}$.

Moreover, the nonzero components of the metric $g$ of $M$ are
\begin{align*}
g_{11}=1+|Df|^2,& \quad g_{aa}=1,\\
g^{11}=\f{1}{1+|Df|^2},&\textrm{ and } g^{aa}=1.
\end{align*}

Using the expression of $P^{ijkl}$, we have
\begin{align*}
P^{ijkl}f^\alpha_{lj}f^\alpha_k \nu_i =&R^{ijkl}f^\alpha_{lj}f^\alpha_k \nu_i+R^{jk}g^{il}f^\alpha_{lj}f^\alpha_k \nu_i\\
                                      &-R^{jl}g^{ik}f^\alpha_{lj}f^\alpha_k \nu_i-R^{ik}g^{jl}f^\alpha_{lj}f^\alpha_k \nu_i\\
                                      &+R^{il}g^{jk}f^\alpha_{lj}f^\alpha_k \nu_i+\f{1}{2}R(g^{ik}g^{jl}-g^{il}g^{jk})f^\alpha_{lj}f^\alpha_k \nu_i\\
                                      =&I+II+III+IV+V+VI.
\end{align*}

Now we calculate these six terms as follows.
\begin{align*}
I &=R^{1a1b}f^\alpha_{ab}f^\alpha_1 \nu_1=-R^{1a1b}A_{ab}(f^\alpha_1)^2=-R^{1a1b}A_{ab}|Df|^2,\\
II&=R^{1a}g^{11}f^\alpha_{1a}f^\alpha_1\nu_1=-R^{1a}f^\alpha_{1a}\f{|f^\alpha_1|}{1+|Df|^2},\\
III&=-R^{jl}g^{11}f^\alpha_{lj}f^\alpha_1 \nu_1\\
&=(2R^{1l}f^\alpha_{1l}+R^{ab}A_{ab}|f^\alpha_1|-R^{11}f^{\alpha}_{11})\f{|f^\alpha_1|}{1+|Df|^2},\\
IV&=-R^{11}g^{jl}f^\alpha_{lj}f^\alpha_1\nu_1=R^{11}(H|Df|^2+\f{f^\alpha_{11}|f^\alpha_1|}{1+|Df|^2}),\\
V&=R^{1l}g^{11}f^\alpha_{1l}f^\alpha_1\nu_1=-R^{1l}f^\alpha_{1l}\f{|f^\alpha_1|}{1+|Df|^2},\\
VI&=\f{1}{2}R(g^{11}g^{jl}f^\alpha_{lj}f^\alpha_1 \nu_1-g^{1l}g^{1j}f^\alpha_{lj}f^\alpha_1\nu_1)\\
&=-\f{1}{2}RH\f{|Df|^2}{1+|Df|^2},
\end{align*}
where $H$ is the mean curvature of embedding $(\Sigma,h)$ in $\R^n$. Putting these together, we obtain
\begin{align}
P^{ijkl}f^\alpha_{lj}f^\alpha_k \nu_i=&-R^{1a1b}A_{ab}|Df|^2+R^{11}H|Df|^2\nonumber\\
&\quad +R^{ab}A_{ab}\f{|Df|^2}{1+|Df|^2}-\f{1}{2}RH\f{|Df|^2}{1+|Df|^2} \label{eq1}.
\end{align}

Since $\Sigma$ is the level set of $f$, we can identify $\Sigma$ with its graph. Thus $(\Sigma,h)$ is also an isometric embedding in $(M,g)$. Denote $\tilde{A}_{ab}$ the second fundamental form of the embedding $(\Sigma,h) \hookrightarrow (M^n,g)$. Then a direct calculation gives that
\begin{equation}
\tilde{A}_{ab}=\f{1}{\sqrt{1+|Df|^2}}A_{ab}.
\end{equation}

\begin{rem}
From the above formula, we can see that if $|Df|=+\infty$ on $\Sigma$, then $\Sigma$ is area-minimizing horizon . In the next section, we will show that under the condition of Theorem \ref{thm4}, $|Df|=+\infty$ on $\Sigma$.
\end{rem}

Denote $\widehat{Rm}$ the Riemannian curvature of $\Sigma$. Then by the Gauss equation we obtain
\begin{align*}
\hat{R}^{abcd}&=A^{ac}A^{bd}-A^{ad}A^{bc}
\end{align*}
and also
\begin{align*}
    \hat{R}^{abcd}&=R^{abcd}+\tilde{A}^{ac}\tilde{A}^{bd}-\tilde{A}^{ad}\tilde{A}^{bc}.
\end{align*}
Thus
\begin{equation}
R^{abcd}=\f{|Df|^2}{1+|Df|^2}\hat{R}^{abcd}.
\end{equation}

Now we can write down
\begin{align*}
R^{11}&=R^{a1a1}g_{aa}\\
      &=R^{a1a1},\\
R^{ab}&=R^{acbd}g_{cd}+R^{a1b1}g_{11}\\
      &=R^{acbc}+R^{a1b1}(1+|Df|^2),\\
R     &=R^{11}g_{11}+R^{ab}g_{ab}\\
      &=2R^{1a1a}(1+|Df|^2)+R^{acac}.
\end{align*}

Plugging these terms into \eqref{eq1}, we have
\begin{align*}
P^{ijkl}f^\alpha_{lj}f^\alpha_k \nu_i&=-R^{1a1b}A_{ab}|Df|^2+R^{a1a1}H|Df|^2\\
                                     &+(R^{acbc}+R^{a1b1}(1+|Df|^2))A_{ab}\f{|Df|^2}{1+|Df|^2}\\
                                     &-\f{1}{2}(2R^{1a1a}(1+|Df|^2)+R^{acac})H\f{|Df|^2}{1+|Df|^2}\\
                                     &=\f{|Df|^2}{1+|Df|^2}(R^{acbc}A_{ab}-\f{1}{2}HR^{abab})\\
                                     &=(\f{|Df|^2}{1+|Df|^2})^2(\hat{R}^{ab}-\f{1}{2}\hat{R}h^{ab})A_{ab}\\
                                     &=-3(\f{|Df|^2}{1+|Df|^2})^2H_3,\\
\end{align*}
which completes the proof of Theorem \ref{thm3}.

\section{The proofs of Theorem \ref{thm4} and Theorem \ref{thm-penrose}}

\begin{proof}[Proof of Theorem \ref{thm4}]
Since $M$ extends to the boundary in a $C^2$ fashion, the limit $\lim\limits_{x\rightarrow\pt\Omega}|Df|^2$ exists (could be $+\infty$). In fact, we claim that under the condition of Theorem \ref{thm4}, the limit
\begin{equation}\label{eq4}
\lim\limits_{x\rightarrow \pt \Omega} |Df|^2=+\infty.
\end{equation}

To prove this claim, we fix a point $p\in \pt \Omega$. Suppose on the contrary that $\lim\limits_{x\rightarrow p} |Df|^2<+\infty$. At the point $p$, as in the last section, we can rotate the coordinates such that $\{e_2,\cdots,e_n\}$ forms the basis of the tangent space $T_p\Sigma$.

Now by composing an orthogonal transformation of $\R^m$ with $f:\R^n\setminus \Omega\rightarrow \R^m$, we can assume that the straight line $l=\{(0;t e_1)\in \R^{n+m}|t\in \R\}$ without loss of generality. Since the manifold $\bar{M}$ is tangent to the cylinder $\Sigma \times l$, we know that $\{(0;e_1),(e_2;0),\cdots,(e_n;0)\in \R^{n+m}\}$ is a basis of $T_pM$.

On the other hand, recall that at any point where $|Df|^2<+\infty$, i.e. $f^\alpha_i$ is finite, $\{\pt_i=(e_i; f^\alpha_i e_\alpha)\in \R^{n+m}\}$ is another basis of $T_pM$. But these two bases of $T_pM$ can not be represented mutually, which is a contradiction. Therefore, we prove the claim \eqref{eq4}. From this claim, Theorem \ref{thm4} follows immediately.
\end{proof}

\begin{proof}[Proof of Theorem \ref{thm-penrose}]
 Assume each connected component $\Sigma_i$ of the boundary $\pt\Omega$ is $3$-convex and star-shaped, the Alexandrov-Fenchel ienquality \cite{GL} (see also \cite{CW}) gives that
\begin{align}\label{eq-5-1}
3c_2(n)\int_{\Sigma_i}H_3d\mu_{\Sigma_i}\geq& \frac 14(\frac {|\Sigma_i|}{\omega_{n-1}})^{\frac {n-4}{n-1}}.
\end{align}
The equality holds if and only if $\Sigma_i$ is a sphere. Recall that for nonnegative numbers $a_1,\cdots,a_k$ and $0\leq\beta\leq 1$, we have the following algebraic inequality (see \cite{HW2})
\begin{align}
    \sum_i^ka_i^{\beta}\geq (\sum_i^ka_i)^{\beta}.
\end{align}
If $0\leq\beta<1$, the equality holds if and only if at most one of $a_i$ is nonzero. Thus by summing \eqref{eq-5-1}, we obtain that
 \begin{align*}
3c_2(n)\int_{\Sigma}H_3d\mu_{\Sigma_i}\geq& \sum_i\frac 14(\frac {|\Sigma_i|}{\omega_{n-1}})^{\frac {n-4}{n-1}}\\
\geq&\frac 14(\frac {|\Sigma|}{\omega_{n-1}})^{\frac {n-4}{n-1}}
\end{align*}
Combining with Theorem \ref{thm4}, we get the Penrose inequality \eqref{eq-penrose}. Moreover, if the equality holds, we have that the second Gauss-Bonnet curvature $L_2$ vanishes on $M$ and $\Sigma$ has only one connected component and is a sphere.
\end{proof}

\section{Positive mass theorem and Penrose inequality in Einstein-Gauss-Bonnet gravity}

In \cite{GWW1}, Ge-Wang-Wu obtained a positive mass theorem for asymptotically flat codimension one graph under the condition $R+\alpha L_2\geq 0$ and a Penrose type inequality in the case $\alpha>0$, where $\alpha$ is a constant.  In this section, we give a generalization of their result to arbitrary codimension graph.

Let $n\geq 4$ and $(M^n,g)$ be an asymptotically flat manifold of decay order $\tau>\frac{n-2}2$ and $R+\alpha L_2$ is integrable in $(M,g)$. The Einstein-Gauss-Bonnet mass energy is defined by (\cite{GWW1}, see also \cite{DT1,DT2})
\begin{align}\label{mass-EGB}
    m_{EGB}=\frac 1{(n-1)\omega_{n-1}}\lim_{r\rightarrow\infty}\int_{S_r}(P_1^{ijkl}+\alpha P_2^{ijkl})\pt_lg_{jk}\nu_idS.
\end{align}
Ge-Wang-Wu \cite{GWW1} proved that $m_{EGB}$ is well-defined and is a geometric invariant on $M$.

\begin{thm}[Positive mass theorem]
Suppose $(M^n,g)$ is a graph of a smooth asymptotically flat map $f:\R^n\rightarrow\R^m$ of order $\tau>\frac {n-2}2$. Then we have
\begin{align*}
    m_{ADM}=m_{EGB}=&\frac 1{2(n-1)\omega_{n-1}}\int_{M}(R+\alpha L_2+R^{\bot}+\alpha L_2^{\bot})\frac 1{\sqrt{G}}d\mu_M,
\end{align*}
where $R^{\bot}=\langle R^{\bot}_{\alpha\gamma}(\nabla f^{\alpha}),\nabla f^{\gamma}\rangle$ and $L_2^{\bot}$ is expressed as in \eqref{eq-L2}. In particular, when $M$ has flat normal bundle as a submanifold of $\R^{n+m}$ and $R+\alpha L_2\geq 0$, we have
\begin{align}
    m_{ADM}\geq 0.
\end{align}
\end{thm}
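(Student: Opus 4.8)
The plan is to mimic the structure of the proof of Theorem \ref{thm2}, replacing the tensor $P_2$ by the combination $P_1+\alpha P_2$ and exploiting the fact that $P_1$ and $P_2$ individually satisfy the symmetry \eqref{P-symmetri} and divergence-free \eqref{P-divfree} properties. First I would record the two pointwise identities already established in the excerpt, namely
\begin{align*}
\pt_i\bigl(P_1^{ijkl}\pt_lg_{jk}\bigr)=\tfrac12R+\tfrac12R^{\bot},\qquad
\pt_i\bigl(P_2^{ijkl}\pt_lg_{jk}\bigr)=\tfrac12L_2+\tfrac12L_2^{\bot},
\end{align*}
the first being the Remark after Proposition \ref{prop-mass} and the second being Proposition \ref{prop-mass} itself. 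By linearity these combine to
\begin{align*}
\pt_i\bigl((P_1^{ijkl}+\alpha P_2^{ijkl})\pt_lg_{jk}\bigr)
=\tfrac12\bigl(R+\alpha L_2\bigr)+\tfrac12\bigl(R^{\bot}+\alpha L_2^{\bot}\bigr).
\end{align*}

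Next I would feed this into the definition \eqref{mass-EGB} of $m_{EGB}$ and apply the divergence theorem on $\R^n$, exactly as in the proof of Theorem \ref{thm2}: the flux integral over $S_r$ equals the integral of the divergence over $\R^n$, which by the identity above equals
\[
\frac1{2(n-1)\omega_{n-1}}\int_{\R^n}\bigl(R+\alpha L_2+R^{\bot}+\alpha L_2^{\bot}\bigr)\,d\mu_{\R^n}.
\]
Converting $d\mu_{\R^n}=\tfrac1{\sqrt G}\,d\mu_M$ then gives the claimed formula for $m_{EGB}$. The equality $m_{ADM}=m_{EGB}$ follows because $m_{ADM}$ is itself recovered from this formula by setting $\alpha=0$ (using \eqref{def-adm1} and the $P_1$ identity), together with the fact, cited from \cite{GWW1}, that $m_{EGB}$ is a well-defined geometric invariant; alternatively one invokes directly that both masses coincide with the same boundary-term limit once the decay order $\tau>\tfrac{n-2}2$ is assumed, which is the hypothesis ensuring convergence of all the integrals. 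For the positivity statement, assume the normal bundle is flat; then $R^{\bot}=\langle R^{\bot}_{\alpha\gamma}(\nabla f^\alpha),\nabla f^\gamma\rangle$ vanishes since every $R^{\bot}_{\alpha\beta}=0$, and $L_2^{\bot}$ vanishes as well by Proposition \ref{prop-mass} (all terms in \eqref{eq-L2} contain a factor $R^{\bot}_{\alpha\beta}$). The formula then reduces to $m_{ADM}=\frac1{2(n-1)\omega_{n-1}}\int_M(R+\alpha L_2)\tfrac1{\sqrt G}\,d\mu_M$, which is nonnegative under the hypothesis $R+\alpha L_2\ge0$.

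The routine but slightly delicate point is the convergence and well-definedness: with decay order only $\tau>\tfrac{n-2}2$ one must check that $R+\alpha L_2+R^{\bot}+\alpha L_2^{\bot}$ is integrable so that the divergence theorem applies and the boundary term at infinity vanishes; this is where the integrability hypothesis on $R+\alpha L_2$ together with the decay of $f$ enters, and it is handled exactly as in \cite{GWW1,GWW}. The main conceptual obstacle—showing that $\tfrac12L_2^{\bot}$ is a sum of normal-curvature terms so that it dies on a flat normal bundle—has already been overcome in Proposition \ref{prop-mass}, so this final theorem is essentially a corollary obtained by carrying the constant $\alpha$ through the same computation.
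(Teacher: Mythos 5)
Most of your proposal coincides with the paper's own argument: the paper also combines $\pt_i(P_1^{ijkl}\pt_lg_{jk})=\tfrac12R+\tfrac12R^{\bot}$ with Proposition \ref{prop-mass} to get $\pt_i((P_1^{ijkl}+\alpha P_2^{ijkl})\pt_lg_{jk})=\tfrac12(R+\alpha L_2+R^{\bot}+\alpha L_2^{\bot})$, applies the divergence theorem as in Theorem \ref{thm2}, converts $d\mu_{\R^n}=\tfrac{1}{\sqrt G}d\mu_M$, and uses flatness of the normal bundle to kill $R^{\bot}$ and $L_2^{\bot}$. That part is fine.

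The gap is in your justification of $m_{ADM}=m_{EGB}$. Your first argument --- recover $m_{ADM}$ by ``setting $\alpha=0$'' and then cite well-definedness of $m_{EGB}$ from \cite{GWW1} --- does not prove the equality: setting $\alpha=0$ gives $m_{ADM}=\tfrac{1}{2(n-1)\omega_{n-1}}\int_M(R+R^{\bot})\tfrac{1}{\sqrt G}d\mu_M$, and comparing this with your formula for $m_{EGB}$ shows that the equality $m_{ADM}=m_{EGB}$ is \emph{equivalent} to
\begin{align*}
\int_{\R^n}\bigl(L_2+L_2^{\bot}\bigr)\,d\mu_{\R^n}=0,
\end{align*}
which geometric invariance of $m_{EGB}$ does not give you. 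The missing ingredient, and the first step of the paper's proof, is the explicit decay estimate: since $P_2$ is quadratic in the curvature, under $\tau>\tfrac{n-2}2$ (indeed already under $\tau>\tfrac{n-4}2$) one has
\begin{align*}
\int_{S_r}P_2^{ijkl}\pt_lg_{jk}\,\nu_i\,dS=O(r^{\,n-4-2\tau})\longrightarrow 0,
\end{align*}
so the $\alpha P_2$ contribution to the flux in \eqref{mass-EGB} vanishes in the limit, which simultaneously gives $m_{EGB}=m_{ADM}$ (via \eqref{def-adm1}) and, through the divergence theorem applied to the $P_2$ identity alone, the vanishing of $\int_{\R^n}(L_2+L_2^{\bot})\,d\mu_{\R^n}$ that makes the displayed formula for $m_{ADM}$ consistent. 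Your ``alternative'' remark that both masses ``coincide with the same boundary-term limit'' gestures at this but never states or proves the decay estimate, and attributing it to ``convergence of all the integrals'' is not accurate: $\tau>\tfrac{n-2}2$ alone does not make $R$ integrable (that is a separate hypothesis in the paper's setup); what the stronger decay buys is precisely the vanishing of the Gauss--Bonnet--Chern flux at infinity. With that estimate supplied, the rest of your argument goes through as in the paper.
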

\proof
Since $\frac {n-2}2>\frac {n-4}2$, under the condition $\tau>\frac {n-2}2$, we have
\begin{align*}
    \lim_{r\rightarrow\infty}\int_{S_r}P_2^{ijkl}\pt_lg_{jk}\nu_idS=\lim_{r\rightarrow\infty}O(r^{-2\tau+n-4})=0.
\end{align*}
Thus $m_{ADM}=m_{EGB}$. From the proof of Proposition \ref{prop-mass}, one can also get
\begin{align}
    \pt_i(P_1^{ijkl}\pt_lg_{jk})=\frac 12R+\frac 12R^{\bot}.
\end{align}
Therefore
\begin{align}
    \pt_i((P_1^{ijkl}+\alpha P_2^{ijkl})\pt_lg_{jk})=\frac 12(R+\alpha L_2+R^{\bot}+\alpha L_2^{\bot}).
\end{align}
The rest of the proof is similar as Theorem \ref{thm2}, which we omit here.
\endproof

Similarly as in section 4 and section 5, we can prove
\begin{thm}[Penrose inequality]
Let $\Omega\subset\R^n$ be an open subset. Let $f:\R^n \setminus \Omega \rightarrow \R^m$ be a continuous map that is constant along each connected component of the boundary $\Sigma=\pt \Omega$ and asymptotically flat in $\R^n\setminus \bar{\Omega}$ of order $\tau>\frac {n-2}2$. Assume that the graph $M$ of $f$ extends $C^2$ to its boundary $\pt M$. Assume further that along each connected component $\Sigma_i$ of $\pt M$, the manifold $\bar{M}$ is tangent to the cylinder $\Sigma \times l_i$, where $l_i$ is a straight line of $\R^m$. Then
\begin{align*}
m_{ADM}=&\f1{2(n-1)\omega_{n-1}}\int_M (R+\alpha L_2+R^{\bot}+\alpha L_2^{\bot})\frac 1{\sqrt{G}}d\mu_M\\
&+\f{1}{2(n-1)\omega_{n-1}}\int_\Sigma (H+ 6\alpha H_3) d\mu_{\Sigma},
\end{align*}
where $H$ and $H_3$ are mean curvature and the $3$-th mean curvature of the embedding $\Sigma$ in $\R^n$. Moreover, if $M$ has nonnegative $R+\alpha L_2$ and flat normal bundle,  each connected components of the boundary $\Sigma=\pt \Omega$ is $3$-convex and star-shaped, $\alpha\geq 0$, then
\begin{align}\label{eq-penrose}
  m_{ADM}\geq &\frac 12(\frac {|\Sigma|}{\omega_{n-1}})^{\frac {n-2}{n-1}}+\frac {\alpha}2(n-2)(n-3)(\frac {|\Sigma|}{\omega_{n-1}})^{\frac {n-4}{n-1}}.
\end{align}
\end{thm}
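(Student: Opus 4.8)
The plan is to mimic exactly the chain of reductions already used to prove Theorem~\ref{thm3} and Theorem~\ref{thm4}, but carrying both densities $P_1^{ijkl}$ and $P_2^{ijkl}$ along simultaneously. First I would establish the analogue of the claim \eqref{eq4}: since $\bar M$ is tangent to the cylinder $\Sigma\times l_i$ along each boundary component, the argument of the proof of Theorem~\ref{thm4} (comparing the two candidate bases $\{(0;e_1),(e_2;0),\dots,(e_n;0)\}$ and $\{\pt_i=(e_i;f^\alpha_ie_\alpha)\}$ of $T_pM$) shows $|Df|^2\to+\infty$ as $x\to\pt\Omega$. Hence the boundary terms in the divergence identity simplify: in the limit $|Df|^2/(1+|Df|^2)\to 1$, so the weight $(|Df|^2/(1+|Df|^2))^2$ appearing in Theorem~\ref{thm3} becomes $1$.

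Next I would integrate the two divergence identities
$\pt_i(P_1^{ijkl}\pt_lg_{jk})=\tfrac12 R+\tfrac12 R^{\bot}$ (the remark following Proposition~\ref{prop-mass}) and
$\pt_i(P_2^{ijkl}\pt_lg_{jk})=\tfrac12 L_2+\tfrac12 L_2^{\bot}$ (Proposition~\ref{prop-mass})
over $\R^n\setminus\Omega$, picking up boundary contributions on $\Sigma$. For the $P_2$ piece the boundary computation is exactly the six-term expansion $I+\cdots+VI$ carried out in Section~4, which collapses to $-3(|Df|^2/(1+|Df|^2))^2H_3$ and, using $|Df|^2\to\infty$, to $-3H_3$; this produces the $6\alpha H_3$ term after multiplying by the constants. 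For the $P_1$ piece I would run the identical but shorter computation: expanding $P_1^{ijkl}f^\alpha_{lj}f^\alpha_k\nu_i=\tfrac12(g^{ik}g^{jl}-g^{il}g^{jk})f^\alpha_{lj}f^\alpha_k\nu_i$ in the rotated frame $\nu=-e_1$, $f^\alpha_a=0$, $f^\alpha_{ab}=A_{ab}|f^\alpha_1|$, one gets $-\tfrac12 H\,|Df|^2/(1+|Df|^2)\to -\tfrac12 H$, yielding the $H$ term. Combining, $\pt_i((P_1+\alpha P_2)^{ijkl}\pt_lg_{jk})$ integrates to the stated bulk integral of $R+\alpha L_2+R^{\bot}+\alpha L_2^{\bot}$ plus the boundary integral of $H+6\alpha H_3$ over $\Sigma$, up to the normalizing constant $\tfrac1{2(n-1)\omega_{n-1}}$; note that since $\tau>\frac{n-2}2>\frac{n-4}2$, the $P_2$ part of $m_{EGB}$ contributes nothing to the mass at infinity, so $m_{ADM}=m_{EGB}$ equals this total.

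Finally, for the inequality \eqref{eq-penrose}, assume $M$ has flat normal bundle (so $R^{\bot}=L_2^{\bot}=0$), $R+\alpha L_2\ge 0$, $\alpha\ge 0$, and each $\Sigma_i$ is $3$-convex and star-shaped. Then the bulk integral is nonnegative, so $m_{ADM}\ge \tfrac1{2(n-1)\omega_{n-1}}\int_\Sigma(H+6\alpha H_3)d\mu_\Sigma$. I would apply the classical Minkowski (Alexandrov--Fenchel) inequality for $\int_{\Sigma_i}H\,d\mu$ together with the Guan--Li inequality \cite{GL} for $\int_{\Sigma_i}H_3\,d\mu$ used in Theorem~\ref{thm-penrose}, to bound each boundary integral below by the appropriate power of $|\Sigma_i|/\omega_{n-1}$; then sum over $i$ using the superadditivity inequality $\sum a_i^\beta\ge(\sum a_i)^\beta$ for $0\le\beta\le1$ (applied with $\beta=\frac{n-2}{n-1}$ and $\beta=\frac{n-4}{n-1}$) to pass from $\sum_i|\Sigma_i|$ to $|\Sigma|$. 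Matching the constants $\tfrac1{2(n-1)\omega_{n-1}}$ against $\omega_{n-1}$ and the dimensional factors $(n-2)(n-3)$ coming from $c_2(n)$ versus the $P_1$ normalization gives precisely \eqref{eq-penrose}.

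The main obstacle I expect is purely bookkeeping rather than conceptual: tracking the numerical constants so that the two pieces land with the stated coefficients $\tfrac12$ and $\tfrac{\alpha}2(n-2)(n-3)$ — in particular verifying that the constant in the Guan--Li inequality combines with $\tfrac1{2(n-1)\omega_{n-1}}$ exactly as $3c_2(n)$ did with $c_2(n)$ in Theorem~\ref{thm-penrose}, which forces the factor $(n-2)(n-3)$. A secondary point to check carefully is that $|Df|^2\to\infty$ is legitimately used to replace the weights by $1$ in \emph{both} boundary computations, which relies on the $C^2$ extension hypothesis so that the limits of $H$, $H_3$, and the curvature components of $M$ restricted to $\Sigma$ all exist.
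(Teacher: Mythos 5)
Your proposal is correct and follows essentially the same route the paper intends: its proof of this theorem is only the remark that it goes ``similarly as in sections 4 and 5,'' i.e.\ integrate the $P_1$ and $P_2$ divergence identities, use the boundary computations of Theorems \ref{thm3}--\ref{thm4} (with the weight $\to 1$ since $|Df|\to\infty$), and then apply the Guan--Li quermassintegral inequalities for $\int_\Sigma H$ and $\int_\Sigma H_3$ together with superadditivity, exactly as you outline. Your constant bookkeeping also checks out, since $\tfrac{1}{2(n-1)\omega_{n-1}}=(n-2)(n-3)\,c_2(n)$ turns the bound \eqref{eq-5-1} into the coefficient $\tfrac{\alpha}{2}(n-2)(n-3)$ in \eqref{eq-penrose}.
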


\bibliographystyle{Plain}

\end{document}